\theoremstyle{remark}
\newtheorem{para}{\bf}[subsection]
\newtheorem{example}[para]{\bf Example}
\newtheorem{rem}[para]{\bf Remark}
\theoremstyle{definition}
\newtheorem{dfn}[para]{Definition}
\theoremstyle{plain}
\newtheorem{thm}[para]{Theorem}
\newtheorem{lemma}[para]{Lemma}
\newtheorem{cor}[para]{Corollary}
\newtheorem{prop}[para]{Proposition}
\newcommand{\bbA}{{\mathbb A}}
\newcommand{\bbN}{{\mathbb N}}
\newcommand{\bbQ}{{\mathbb Q}}
\newcommand{\bbZ}{{\mathbb Z}}
\newcommand{\frb}{{\mathfrak b}}
\newcommand{\frc}{{\mathfrak c}}
\newcommand{\frg}{{\mathfrak g}}
\newcommand{\frh}{{\mathfrak h}}
\newcommand{\frl}{{\mathfrak l}}
\newcommand{\frn}{{\mathfrak n}}
\newcommand{\frp}{{\mathfrak p}}
\newcommand{\fru}{{\mathfrak u}}
\newcommand{\frx}{{\mathfrak x}}
\newcommand{\frz}{{\mathfrak z}}
\newcommand{\frH}{{\mathfrak H}}
\newcommand{\frX}{{\mathfrak X}}
\newcommand{\cA}{{\mathcal A}}
\newcommand{\cC}{{\mathcal C}}
\newcommand{\cD}{{\mathcal D}}
\newcommand{\cF}{{\mathcal F}}
\newcommand{\cH}{{\mathcal H}}
\newcommand{\cM}{{\mathcal M}}
\newcommand{\cO}{{\mathcal O}}
\newcommand{\End}{{\rm End}}
\newcommand{\hUg}{\hat{U}(\frg)}
\newcommand{\Ug}{U(\frg)}
\newcommand{\hUh}{\hat{U}(\frh)}
\newcommand{\Uh}{U(\frh)}
\newcommand{\hUpI}{\hat{U}(\frp_I)}
\newcommand{\al}{\alpha}
\newcommand{\hSg}{\hat{S}(\frg)}
\newcommand{\Sg}{S(\frg)}
\newcommand{\hSh}{\hat{S}(\frh)}
\newcommand{\Sh}{S(\frh)}
\newcommand{\car}{\stackrel{\cong}{\longrightarrow}}
\newcommand{\hO}{\hat{\cO}}
\newcommand{\hOc}{\hat{\cO}_\chi}\newcommand{\hOcp}{\hat{\cO}_\chi^\frp}
\newcommand{\hM}{\hat{M}}
\newcommand{\hMIl}{\hat{M}_I(\lambda)}
\newcommand{\hMl}{\hat{M}(\lambda)}
\newcommand{\hLl}{\hat{L}(\lambda)}
\newcommand{\hZg}{\hat{Z}(\frg)}
\newcommand{\Mod}{{\rm Mod}}
\begin{document}

\title{Verma modules over $p$-adic Arens-Michael envelopes of reductive
Lie algebras} \author[Tobias Schmidt]{Tobias Schmidt}
\address{Mathematisches Institut\\ Westf\"alische Wilhelms-Universit\"at
M\"unster\\ Einsteinstr. 62\\ D-48149 M\"unster, Germany}
\email{toschmid@math.uni-muenster.de}

\maketitle

\begin{abstract} Let $K$ be a locally compact
$p$-adic field, $\frg$ a split reductive Lie algebra over $K$ and
$U(\frg)$ its universal enveloping algebra. We investigate the
category $\cC_\frg$ of coadmissible modules over the $p$-adic
Arens-Michael envelope $\hat{U}(\frg)$ of $U(\frg)$. Let
$\frp\subseteq\frg$ be a parabolic subalgebra. The main result
gives a canonical equivalence between the classical parabolic
$BGG$ category of $\frg$ relative to $\frp$ and a certain
explicitly given highest weight subcategory of $\cC_\frg$. This
completely clarifies the "Verma module theory" over
$\hat{U}(\frg)$.
\end{abstract}

\normalsize

\tableofcontents

\vskip30pt

\section{Introduction}

Let $p$ be a prime number and let $K$ be a locally compact
$p$-adic field. Let $G$ be a $d$-dimensional $p$-adic Lie group
defined over $K$ with a split reductive Lie algebra $\frg$. Let
$U(\frg)$ denote the universal enveloping algebra of $\frg$.

\vskip8pt

The Arens-Michael envelope $\hat{U}(\frg)$ of $U(\frg)$ equals the
completion of $U(\frg)$ with respect to all submultiplicative
seminorms. Being an interesting $p$-adic power series envelope of
$U(\frg)$ in its own right it is also an important technical tool
in the study of locally analytic $G$-representations (e.g.
\cite{TeitelbaumParis} for a short introduction). As a ring it is
best understood as Fr\'echet-Stein algebra (in the sense of
Schneider-Teitelbaum, cf. \cite{ST5}), i.e. a noncommutative
version of the ring of holomorphic functions on rigid analytic
affine $d$-space. From this angle the coherent module sheaves on
affine $d$-space are generalized to the abelian category
$\cC_\frg$ of {\it coadmissible} (left) $\hat{U}(\frg)$-modules.
Due to the close relation of $\hat{U}(\frg)$ to the locally
analytic distribution algebra of $G$ the latter category is a
first approximation to the category of admissible locally analytic
$G$-representations. Besides the short note \cite{SchmidtSTAB}
there are practically no results about the specific structure of
$\cC_\frg$ so far.

\vskip8pt

In this note we single out certain full subcategories of
$\cC_\frg$ and establish canonical equivalences to certain
well-known highest weight categories over $\frg$. In particular,
this completely clarifies the "Verma module theory" over
$\hat{U}(\frg)$. To be more precise, let $\frp\subseteq\frg$ be a
parabolic subalgebra. On the one hand, we then have the well-known
parabolic $BGG$ category $\cO^\frp$ in the sense of
Bernstein-Gelfand-Gelfand and Rocha-Caridi (\cite{BGG2},
\cite{Rocha-Caridi}). This is a certain full abelian subcategory
of finitely generated $\frg$-modules with appropriate finiteness
conditions for the action of the Levi subalgebra and the nilpotent
radical of $\frp$ respectively. It is known to be artinian and
noetherian and allows a block decomposition with respect to the
central action. Any block is equivalent to a category of finitely
generated modules over a quasi-hereditary finite dimensional
$K$-algebra. The structure of the latter algebras was made
explicit by work of W. Soergel (\cite{Soergel}). Prominent objects
in $\cO^\frp$ are the generalized Verma modules in the sense of J.
Lepowsky (\cite{Lepowsky}).

\vskip8pt

 In defining a genuine $p$-adic counterpart of $\cO^\frp$ over $\hat{U}(\frg)$ we build upon
 a certain weight theory for topological Fr\'echet modules over commutative Fr\'echet algebras (\cite{Feaux}).
 Applying it to the Arens-Michael envelope $\hat{U}(\frh)$ of a Cartan subalgebra $\frh$ of $\frg$ enables us to explicitly
 define certain highest weight categories $\hO^\frp$
 within $\cC_\frg$ by imposing appropriate compactness
 conditions on the weights related to $\frp$. The main objects turn out to be certain
 Verma type modules whose properties closely parallel the classical case.
In particular, they admit unique irreducible quotients
parametrized by the linear dual of $\frh$ and any irreducible
object in $\hO^\frp$ occurs like this. To go further, the
existence of the $p$-adic
 Harish-Chandra homomorphism (\cite{KohlhaaseI}) leads to a decomposition
 of $\hO^\frp$ with respect to central characters $\chi$ of $\hUg$. The blocks
 $\hO^\frp_\chi$ are noetherian and artinian. This makes possible to prove the following main result.

\vskip8pt

As with any Arens-Michael envelope there is a natural map
$\Ug\rightarrow\hUg$. We show that base change along this map
induces an equivalence of categories $\cO^\frp\car\hO^\frp$ which
preserves the central blocks, the Verma modules and their
irreducible quotients. A quasi-inverse can be given explicitly.
The proof of our main result builds on results of
\cite{SchmidtSTAB} and well-known properties of the categories
$\cO^\frp$.

\vskip8pt

We assemble some information on quasi-hereditary algebras and
$BGG$-reciprocity in an appendix.

\vskip8pt

{\it Acknowledgement.} Part of this work was done during a stay of
the author at the IHP Paris during the Galois trimester 2010. The
author thanks this institution for its hospitality and generosity.
The author also wishes to thank M. Strauch for giving three
inspiring lectures at this event on the $BGG$ category $\cO$ and
locally analytic representations. He also thanks B. Schraen for
pointing out some valuable suggestions on an earlier version of
this text.

\section{Diagonalisable modules}\label{sectdiag}


We begin by reviewing a notion of semisimplicity for topological
Fr\'echet-modules (developed by T. F\'eaux deLacroix, cf.
\cite{Feaux}). The exposition is adapted to our purposes. For all
notions of nonarchimedean functional analysis we refer to P.
Schneider's monograph \cite{NFA}.

\vskip8pt

Let $K$ be a locally compact $p$-adic field and $\cH$ a commutative $K$-algebra. Let $\cH^*$ denote
the set of $K$-{\it valued weights} of $\cH$, i.e. the set of
$K$-algebra homomorphisms $\cH\rightarrow K$. A subset
$Y\subseteq\cH^*$ is called {\it relatively compact} if there are
finitely many elements $h_1,...,h_l$ in $\cH$ such that the map
$$Y\longrightarrow K^l, \;\lambda\mapsto
(\lambda(h_1),...,\lambda(h_l))$$ is injective with relatively
compact image. Let $\cM(\cH)$ denote the category whose objects
are $K$-Fr\'echet spaces $M$ endowed with an action of $\cH$ by
continuous $K$-linear endomorphisms. Morphisms are continuous
$K$-linear maps compatible with $\cH$-actions.

\vskip8pt

Let $\lambda\in\cH^*$. Following \cite{Feaux} a nonzero $m\in M$
is called a {\it$\lambda$-weight vector} if $h.m=\lambda(h).m$ for
all $h\in\cH$. In this case $\lambda$ is called a {\it weight of
$M$}. The closure $M_\lambda$ in $M$ of the $K$-vector space
generated by all $\lambda$-weight vectors is called the
$\lambda$-{\it weight space} of $M$. The module $M$ is called
$\cH$-{\it diagonalisable} if there is a set of weights
$\Pi(M)\subseteq\cH^*$ with the property: to every $m\in M$ there
exists a family $\{m_\lambda\in M_\lambda\}_{\lambda\in\Pi(M)}$
converging cofinite against zero in $M$ and satisfying
$$m=\sum_{\lambda\in\Pi(M)}m_\lambda.$$
Given an $\cH$-diagonalisable module $M$ we may form the abstract
$\cH$-module $$M^{ss}=\oplus_{\lambda\in\Pi(M)}M_\lambda$$
(depending on the choice of $\Pi(M)$).

\begin{prop}\label{diag}
Let $M$ be $\cH$-diagonalisable with a relatively compact set of
weights $\Pi(M)$. The following hold:
\begin{itemize}
\item[(i)] Given $m=\sum_{\lambda{\in\Pi(M)}} m_\lambda$ in $M$
the weight components $m_\lambda$ are uniquely determined by $m$.
If $M$ is contained in a closed $\cH$-invariant subspace of $M$
then so are all $m_\lambda$.

 \item[(ii)] $M$ has no other weights besides the set $\Pi(M)$.

\item[(iii)] Suppose additionally that $\dim_K M_\lambda<\infty$
for all $\lambda\in\Pi(M)$. The map $${\rm (*)}\;\;\;N\mapsto
N\cap M^{ss}$$ induces an inclusion preserving bijection between
the $\cH$-invariant closed subspaces of $M$ and the abstract
$\cH$-invariant subspaces of $M^{ss}$. The inverse is given by
passing to the closure in $M$. \item[(iv)] If in the situation of
(iii) $M$ admits additionally an action of a $K$-algebra
$\cH\subseteq\cA$ that stabilizes $M^{ss}$ then the bijection {\rm
(*)} descends to $\cA$-invariant objects.
\end{itemize}
\end{prop}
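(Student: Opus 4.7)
The strategy is to establish (i) first --- the technical core --- and then derive (ii), (iii), and (iv) as direct consequences. For (i), the starting observation is that cofinite convergence is preserved under bounded rescaling, so relative compactness of $\Pi(M)$ --- which makes each sequence $\{\lambda(h)\}_{\lambda \in \Pi(M)}$ bounded in $K$ for any $h \in \cH$ --- forces $h \cdot m = \sum_\lambda \lambda(h) m_\lambda$ to converge cofinitely for every $h$. Iterating yields the polynomial identity $p(h_1, \ldots, h_l) \cdot m = \sum_\lambda p(\lambda(h_1), \ldots, \lambda(h_l)) m_\lambda$ for any $p \in K[x_1, \ldots, x_l]$, with $h_1, \ldots, h_l \in \cH$ separating the weights in $\Pi(M)$. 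If $\sum m_\lambda = 0$ the left side vanishes for every $p$, and one extracts $m_{\lambda_0} = 0$ by choosing $p$ to isolate the image of $\lambda_0$ in $K^l$ from the images of a cofinitely truncated tail. The second assertion of (i) follows by the same approximation: every $p(h_1, \ldots, h_l) \cdot m$ lies in a closed $\cH$-invariant subspace $N$ containing $m$, so the approximants converging to $m_{\lambda_0}$ force $m_{\lambda_0} \in N$.

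For (ii), let $\nu$ be a weight of $M$ with $\nu$-weight vector $v \neq 0$. Decomposing $v = \sum m_\lambda$ and comparing $h \cdot v = \nu(h) v$ with $h \cdot v = \sum \lambda(h) m_\lambda$ yields $\sum (\lambda(h) - \nu(h)) m_\lambda = 0$; by (i) each summand vanishes, so $\lambda(h) = \nu(h)$ whenever $m_\lambda \neq 0$, and varying $h$ forces the unique surviving $\lambda$ to equal $\nu$, giving $\nu \in \Pi(M)$.

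For (iii), define $\phi(N) = N \cap M^{ss}$ and its candidate inverse $\psi(V) = \overline V$ (closure in $M$); both are inclusion-preserving. The identity $\psi \circ \phi = \mathrm{id}$ on closed $\cH$-invariant subspaces follows from the second half of (i): every $n \in N$ decomposes as $n = \sum n_\lambda$ with $n_\lambda \in N \cap M_\lambda$, so $n$ is the limit of its finite truncations, which lie in $N \cap M^{ss}$. The identity $\phi \circ \psi = \mathrm{id}$ is where the hypothesis $\dim_K M_\lambda < \infty$ enters: an element $w \in \overline V \cap M^{ss}$ has finite weight support $S \subseteq \Pi(M)$ and thus lies in the finite-dimensional, hence closed, subspace $\bigoplus_{\lambda \in S} M_\lambda$; approximating $w$ by a net in $V$ and extracting weight components via (i) produces the components of $w$ inside $V$, forcing $w \in V$. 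Part (iv) is then immediate from the construction: $\cA$-invariance of $N$ passes to the intersection with $M^{ss}$ by hypothesis, and $\cA$-invariance of $V \subseteq M^{ss}$ passes to $\overline V$ because $\cA$ acts by continuous endomorphisms.

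The main obstacle is the uniqueness step in (i). The delicate point is that while Lagrange interpolation on a finite subset of the image of $\Pi(M)$ in $K^l$ separates the image of $\lambda_0$ from a chosen finite tail, the interpolating polynomial may have large seminorms on the remaining cofinite tail, so combining separation with cofinite convergence requires care. I would therefore either execute a refined iterative argument using the separating system $h_1, \ldots, h_l$, or appeal directly to the corresponding uniqueness statement of F\'eaux deLacroix in \cite{Feaux}, where precisely this technical analysis is carried out.
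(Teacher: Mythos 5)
Your derivations of (ii), of the inclusion $N\subseteq\overline{N\cap M^{ss}}$ in (iii), and of (iv) are essentially fine granted (i) (in (iv) you tacitly assume that $\cA$ acts by continuous endomorphisms, which is what happens in the paper's application), and your honest flagging of the delicate uniqueness step in (i) is fair. The genuine gap is elsewhere: the claim that the inclusion $\overline{V}\cap M^{ss}\subseteq V$ in (iii) follows from (i) together with $\dim_K M_\lambda<\infty$. Applying the second half of (i) to the closed $\cH$-invariant subspace $\overline{V}$ only gives that the components $w_\lambda$ of $w\in\overline{V}\cap M^{ss}$ lie in $\overline{V}\cap M_\lambda$; to conclude $w_\lambda\in V$ you would need $\overline{V}\cap M_\lambda=V\cap M_\lambda$, which is exactly the single-weight case of the statement being proved. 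Your alternative phrasing, ``approximate $w$ by a net $v_i\in V$ and extract weight components via (i),'' silently uses that the component maps $m\mapsto m_\lambda$ are continuous (so that $(v_i)_\lambda\to w_\lambda$); this continuity is not contained in (i), and it is precisely the analytic heart of the result. A smaller inaccuracy: relative compactness of $\Pi(M)$ only controls $\lambda(h_i)$ for the finitely many separating elements $h_1,\dots,h_l$, not $\lambda(h)$ for arbitrary $h\in\cH$; this is harmless, since $h\cdot m=\sum_\lambda\lambda(h)m_\lambda$ already follows from continuity of the action of $h$ on the convergent partial sums.

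For comparison, the paper does not argue any of this directly: its proof is a citation of Satz 1.3.19 and Kor. 1.3.22 of \cite{Feaux}, together with the remark that $K$-Fr\'echet spaces are Hausdorff, complete and barrelled --- barrelledness being the ingredient (an equicontinuity/Banach--Steinhaus type argument for the partial-sum projections) that supplies exactly the continuity your sketch is missing. So your fallback of quoting F\'eaux for the uniqueness in (i) is legitimate and is in effect what the paper does, but you would have to quote him (or carry out the equicontinuity analysis) for part (iii) as well; (iii) is not a formal consequence of (i) and finite-dimensionality of the weight spaces.
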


\begin{proof}
This follows from Satz 1.3.19 and Kor. 1.3.22 of \cite{Feaux}.
Note that $K$-Fr\'echet spaces are Hausdorff, complete and
barrelled.
\end{proof}

We let $\cD(\cH)$ denote the full subcategory of $\cM(\cH)$ whose
objects are $\cH$-diagonalisable modules $M$ over a relatively
compact set of weights $\Pi(M)\subseteq \cH^*$ with finite
dimensional weight spaces $M_\lambda$. By the proposition, given
$M\in\cD(\cH)$ the definition of $M^{ss}$ depends solely on $M$
and coincides with the {\it socle} of the abstract $\cH$-module
$M$. Let $Vec_K$ be the category of abstract $K$-vector spaces.
The following proposition is easily checked.
\begin{prop}\label{exact}
The forgetful functor to $Vec_K$ endowes $\cD(\cH)$ with the
structure of exact category. The latter is stable under passage to
closed $\cH$-invariant subspaces and to the corresponding
quotients. The functor on $\cD(\cH)$
$$M\mapsto M^{ss}$$ into the category of abstract $\cH$-modules is
faithful and exact.
\end{prop}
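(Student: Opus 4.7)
The plan is to unpack the three assertions one by one, using Proposition \ref{diag} as the workhorse.

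First I would verify stability of $\cD(\cH)$ under closed $\cH$-invariant subspaces and the corresponding quotients. Let $M \in \cD(\cH)$ and let $N \subseteq M$ be a closed $\cH$-invariant subspace. Proposition \ref{diag}(i) asserts that each weight component of an element of $N$ lies in $N$. Hence $N$ is itself $\cH$-diagonalisable, the set of weights $\Pi(N)$ is a subset of $\Pi(M)$ (hence relatively compact), and $N_\lambda \subseteq M_\lambda$ is finite dimensional. For $M/N$, I would use that a quotient of a Fr\'echet space by a closed subspace is again Fr\'echet, and observe that if $m = \sum_{\lambda \in \Pi(M)} m_\lambda$ in $M$, then applying the (continuous) quotient map yields a cofinitely convergent expansion $\bar m = \sum_\lambda \bar m_\lambda$ in $M/N$, with $\bar m_\lambda$ a $\lambda$-weight vector. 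Thus $M/N \in \cD(\cH)$ with $\Pi(M/N) \subseteq \Pi(M)$ and $(M/N)_\lambda$ a quotient of $M_\lambda$.

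Given this stability, the exact structure is defined in the standard way: the admissible short exact sequences are those sequences $0 \to M' \to M \to M'' \to 0$ of morphisms in $\cD(\cH)$ where $M' \hookrightarrow M$ identifies $M'$ with a closed $\cH$-invariant subspace and $M \twoheadrightarrow M''$ is the corresponding quotient. The axioms of an exact category (closure under pullbacks/pushouts along admissibles, composition of admissibles, etc.) then reduce to the analogous stability properties in the category of Fr\'echet spaces combined with the just-established stability under closed invariants and quotients; this part is routine and I would not grind it out.

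Next I would treat faithfulness of $M \mapsto M^{ss}$. By definition every $m \in M$ is a cofinite limit of finite sums of weight vectors, so $M^{ss}$ is dense in $M$. If $f \colon M \to M'$ is a morphism in $\cD(\cH)$ with $f|_{M^{ss}} = 0$, then continuity forces $f = 0$ on the closure $M$, proving faithfulness.

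The substantive point is exactness. Let $0 \to M' \to M \to M'' \to 0$ be an admissible short exact sequence and consider the induced sequence of socles. Left exactness is immediate: $M' \hookrightarrow M$ preserves weight spaces, so $(M')^{ss} = M' \cap M^{ss} \hookrightarrow M^{ss}$. Exactness in the middle follows from the fact that, for each $\lambda$, $M'_\lambda = M' \cap M_\lambda$ (again by Proposition \ref{diag}(i) applied to $M'$ inside $M$). The main obstacle is surjectivity on the right, and this is where I would focus care: given a weight vector $m'' \in M''_\lambda$, pick any preimage $m \in M$ and expand $m = \sum_\mu m_\mu$. Push the expansion through the continuous quotient map to obtain $m'' = \sum_\mu \bar m_\mu$ in $M''$, with $\bar m_\mu$ a $\mu$-weight vector in $M''$ (or zero). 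By uniqueness of weight component decomposition in $M''$ (Proposition \ref{diag}(i) again), $\bar m_\mu = 0$ for $\mu \neq \lambda$ and $\bar m_\lambda = m''$, so $m_\lambda \in M_\lambda$ is the desired lift. Thus the map $M^{ss} \to (M'')^{ss}$ is surjective on each weight space and hence surjective on the direct sum. The hard part is recognising that the whole argument hinges on the uniqueness-of-weight-decomposition statement, which is only available because of the relative compactness assumption built into $\cD(\cH)$.
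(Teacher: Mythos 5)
Your proposal is correct and follows exactly the route the paper intends: the paper offers no written proof (it declares the statement ``easily checked''), and the verification it has in mind is precisely the one you give, resting on Proposition \ref{diag} --- component-containment in closed $\cH$-invariant subspaces for stability and middle exactness, and uniqueness of the weight decomposition (available only because the weight sets are relatively compact) for lifting weight vectors through quotients. The only cosmetic point is that the finite-dimensionality of the quotient weight spaces claimed in your first paragraph already uses the lifting argument you spell out later, so that step is best cited forward rather than asserted outright.
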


\section{Highest weight categories}\label{secthyper}
Let $p$ be a prime number. Throughout this section $K$ denotes a
locally compact $p$-adic field. Let $|.|$ be a nonarchimedean
valuation on $K$ with $|p|=p^{-1}$ inducing its topology.
\subsection{Fr\'echet-Stein algebras}\label{subsectFS}
In \cite{ST5} P. Schneider and J. Teitelbaum introduce the notion
of Fr\'echet-Stein algebra and show that locally analytic
distribution algebras of compact $p$-adic Lie groups are of such
type (see remark below). Since Arens-Michael envelopes of Lie
algebras over $K$ are another example of this type (see below) we
briefly review the definition.\footnote{Our definition is adapted
to our purposes and slightly more restrictive than in \cite{ST5}.}
A $K$-Fr\'echet algebra $A$ is called (two-sided) {\it
Fr\'echet-Stein} if there is a sequence $q_1\leq q_2 \leq...$ of
algebra norms on $A$ defining its Fr\'echet topology and such that
for all $m\in\bbN$ the completion $A_m$ of $A$ with respect to
$q_m$ is a left and right noetherian $K$-Banach algebra and a flat
left and right $A_{m+1}$-module via the natural map
$A_{m+1}\rightarrow A_{m}$. Any such algebra $A$ gives rise to a
certain full subcategory $\cC_A$ of all (left) $A$-modules, the
{\it coadmissible modules}. As Fr\'echet-Stein algebras are
typically non-noetherian $\cC_A$ serves as a well-behaved
replacement for the category of all finitely generated (left)
$A$-modules. Instead of giving all details of the construction
(cf. \cite{ST5}, \S3) we summarize some basic properties of
$\cC_A$ in the following proposition.
\begin{prop}\label{coadmissibles}
Let $A$ be a Fr\'echet-Stein algebra.
\begin{itemize}
    \item[(i)] The direct sum of two coadmissible $A$-modules is
    coadmissible.
    \item [(ii)] the (co)kernel and (co)image of an arbitrary
    $A$-linear map between coadmissible $A$-modules is
    coadmissible.
    \item[(iii)] the sum of two coadmissible submodules of a
    coadmissible $A$-module is coadmissible.
    \item[(iv)] any finitely generated submodule of a coadmissible
    $A$-module is coadmissible.
    \item[(v)] any finitely presented $A$-module is coadmissible.
    \item[(vi)] $\cC_A$ is an abelian category.
    \item[(vii)] any coadmissible $A$-module $M$ is
equipped with a canonical Fr\'echet topology making it a
topological $A$-module. Any $A$-linear map between two
coadmissible $A$-modules is continuous and strict with closed
image with respect to canonical topologies.
\end{itemize}
\end{prop}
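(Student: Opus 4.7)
The plan is to reduce every assertion to the coherent-sheaf description of coadmissible modules developed in \cite{ST5}, \S3. Writing $A=\varprojlim_n A_n$, a coadmissible $A$-module $M$ is the same data as a compatible family $(M_n)_{n\geq 1}$ with each $M_n$ a finitely generated $A_n$-module and satisfying $A_n\otimes_{A_{n+1}}M_{n+1}\xrightarrow{\sim}M_n$; one recovers $M=\varprojlim_n M_n$, and each $M_n$ carries its canonical quotient Banach topology. The core technical input needed throughout is the exactness of $\varprojlim$ on such coherent systems, which I would establish by a topological Mittag-Leffler argument: the defining isomorphism forces the transition maps $M_{n+1}\to M_n$ to have dense image, and the open-mapping theorem for $K$-Banach spaces together with completeness of each $M_n$ then delivers exactness in the limit.

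With exactness of $\varprojlim$ available, statement (ii) reduces to computing kernels, cokernels and images termwise: noetherianness of each $A_n$ keeps these finitely generated, and the coherence isomorphism is preserved using flatness of $A_n$ over $A_{n+1}$ for kernels and right-exactness of $\otimes$ for cokernels and images. This immediately yields (vi). The remaining algebraic parts are short: (i) is termwise direct sum of coherent systems; (iii) is the image of $M_1\oplus M_2\to N$, combining (i) and (ii); (iv) exhibits a finitely generated submodule as the image of some $A^r\to M$, which is coadmissible by (ii); (v) writes a finitely presented module as the cokernel of some $A^s\to A^r$ and appeals to (ii) once more.

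Finally, for (vii) the canonical Fr\'echet topology on a coadmissible $M$ is identified with the projective limit of the Banach topologies on the $M_n$; continuity of any $A$-linear $f\colon M\to N$ is then functorial, while strictness and closedness of the image follow by applying the open-mapping theorem to each $f_n$ factored through its image, together with the exactness of $\varprojlim$ on coherent systems. The main (and essentially only) obstacle is the topological exactness statement above; once it is in place, the rest is bookkeeping along the lines laid out in \cite{ST5}, \S3.
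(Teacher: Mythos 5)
Your proposal is correct and is essentially the argument the paper delegates to: the paper's ``proof'' consists of citing \cite{ST5}, Cor.~3.4/3.5 and Lem.~3.6, and your sketch reproduces the strategy of \cite{ST5}, \S 3 (coherent sheaves $(M_n)$, dense transition maps plus topological Mittag--Leffler exactness of $\varprojlim$, then termwise reductions for (i)--(vii)). One point to make explicit in a write-up: for (ii) an \emph{arbitrary} $A$-linear map $f\colon M\to N$ must first be seen to induce maps $f_n\colon M_n\to N_n$, which rests on the identification $A_n\otimes_A M\car M_n$ (equivalently, density of the image of $M$ in each $M_n$ together with noetherianness of $A_n$); this is proved in \cite{ST5} by the same successive-approximation technique but is not a formal consequence of the exactness statement you single out as the only obstacle.
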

\begin{proof}\cite{ST5}, Cor. 3.4/3.5 and Lem. 3.6.\end{proof}

Let $A$ be a Fr\'echet-Stein algebra. We will make much use of the
following basic property of the canonical topology.

\begin{lemma}\label{canonicaltop}
For any coadmissible $A$-module $M$ and any abstract $A$-submodule
$N\subseteq M$ the following are equivalent:

\begin{itemize}
    \item[(i)] $N$ is coadmissible.
\item[(ii)] $M/N$ is coadmissible.
    \item[(iii)] $N$ is closed in the canonical topology of $M$.
\end{itemize}
\end{lemma}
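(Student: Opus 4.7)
The plan is to dispatch the three formal implications first and then tackle the substantive direction (iii) $\Rightarrow$ (i) via the Fr\'echet-Stein structure of $A$.

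For (i) $\Leftrightarrow$ (ii), I would apply Proposition~\ref{coadmissibles}(ii) to the obvious $A$-linear maps between coadmissible modules: if $N$ is coadmissible then the inclusion $N \hookrightarrow M$ is $A$-linear between coadmissibles, so its cokernel $M/N$ is coadmissible; conversely, if $M/N$ is coadmissible then the projection $M \twoheadrightarrow M/N$ has kernel $N$, which is again coadmissible by the same part. The implication (i) $\Rightarrow$ (iii) is equally formal: the inclusion $N \hookrightarrow M$ between coadmissibles is continuous and strict with closed image by Proposition~\ref{coadmissibles}(vii), so $N$ is closed in the canonical topology of $M$.

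The substantive content lies in (iii) $\Rightarrow$ (i). Here I would unwind the Fr\'echet-Stein structure: fix defining algebra norms $q_1 \leq q_2 \leq \cdots$ on $A$ with Banach completions $A_n$, so that $M \simeq \varprojlim_n M_n$ where $M_n := A_n \widehat{\otimes}_A M$ is a finitely generated $A_n$-Banach module carrying its natural quotient topology from some surjection $A_n^k \twoheadrightarrow M_n$. The canonical Fr\'echet topology on $M$ coincides with this projective-limit topology. For each $n$ let $N_n \subseteq M_n$ denote the closure of the image of $N$; since $A_n$ is noetherian and $M_n$ finitely generated, $N_n$ is automatically a finitely generated $A_n$-module.

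The main obstacle is to verify that the family $(N_n)_n$ assembles into a coherent sheaf in the Schneider--Teitelbaum sense, i.e. that the transition maps $A_n \widehat{\otimes}_{A_{n+1}} N_{n+1} \to N_n$ are isomorphisms and that the natural map $N \to \varprojlim_n N_n$ is bijective. The coherence will use the flatness of $A_n$ as a right $A_{n+1}$-module from the Fr\'echet-Stein axioms, combined with a density argument based on the definition of $N_n$ as a closure. For the identification $N = \varprojlim_n N_n$, the hypothesis that $N$ is closed in $M$ is essential: it forces $N$ to coincide with the preimage in $M$ of $\prod_n N_n \subseteq \prod_n M_n$. Granting these verifications, $N$ is exhibited as a coadmissible $A$-module. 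These checks are precisely what is carried out in Lemma 3.6, together with Corollaries 3.4/3.5, of \cite{ST5}, to which the proof ultimately appeals.
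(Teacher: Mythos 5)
Your proposal is correct and ultimately rests on the same source as the paper: the paper's entire proof is the citation of \cite{ST5}, Lem.\ 3.6, which states precisely this equivalence, and your substantive direction (iii) $\Rightarrow$ (i) defers to that same lemma after a faithful sketch of its mechanism. The formal implications you dispatch via Proposition~\ref{coadmissibles}(ii) and (vii) are fine, so your write-up is just a more explicit rendering of the same appeal to Schneider--Teitelbaum.
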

\begin{proof}\cite{ST5}, Lem. 3.6.\end{proof}

\begin{rem} Let $G$ denote a locally $K$-analytic
group. With respect to the convolution product the strong dual
$D(G)$ of the $K$-vector space of locally analytic functions on
$G$ is a topological algebra, the so-called {\it locally analytic
distribution algebra} of $G$. The main result of [loc.cit.] proves
that, in case $G$ is compact, $D(G)$ is a Fr\'echet-Stein
algebra. This enables the authors to develop a general theory of
{\it admissible} locally analytic $G$-representations generalizing
the classical notion (\cite{Cartier}) of an {\it admissible}
smooth $G$-representation.

The theory is modelled according to the example
$G=\bbZ_p$, the additive group of $p$-adic integers. In this case, the Fourier
isomorphism of Y. Amice (\cite{Amice2}) identifies $D(G)$ with the ring of
holomorphic functions on the rigid analytic open unit disc. The
latter is a quasi-Stein space in the sense of R. Kiehl
(\cite{Kiehl}).
\end{rem}

\subsection{Arens-Michael envelopes}\label{subsecthyper}

An {\it Arens-Michael $K$-algebra} is a locally convex $K$-algebra
topologically isomorphic to a projective limit of $K$-Banach
algebras. For the theory of such algebras (over the complex
numbers) we refer to the book by A.Y. Helemskii (\cite{Helemskii},
chap. V). Given a locally convex $K$-algebra $A$ its {\it
Arens-Michael envelope} $\hat{A}$ equals the Hausdorff completion
of $A$ with respect to the family of all continuous
submultiplicative seminorms on $A$. It is universal with respect
to continuous $K$-algebra homomorphisms of locally convex
$K$-algebras into Arens-Michael algebras. It comes equipped with a
continuous algebra homomorphism
$$A\longrightarrow \hat{A}$$ with dense image. This construction
gives a functor

$$A\mapsto \hat{A}$$

from locally convex $K$-algebras to Arens-Michael algebras which
is compatible with projective tensor products and passage to
quotients by twosided ideals (cf. \cite{Pirkovskii}, 6.1 for the
complex case; the proofs generalize).

\vskip8pt

Let $\frg$ be a Lie algebra over $K$ of dimension $d$ and let
$U(\frg)$ be its universal enveloping algebra endowed with the
finest locally convex topology. Let $\hUg$ be its Arens-Michael
envelope. It will be convenient to realize $\hUg$ in the following
explicit way.
Fix a $K$-basis $\frx_1,....,\frx_d$ of $\frg$. Using the
associated PBW-basis for $U(\frg)$ we may define for each $r>0$ a
vector space norm on $U(\frg)$ via
\begin{equation}\label{norms} ||\sum_\al
d_\al\frX^\al||_{\frX,r}=\sup_\al |d_\al|r^{|\al|}\end{equation}
where
$\frX^\al:=\frx_1^{\al_1}\cdot\cdot\cdot\frx_d^{\al_d},~\al\in\bbN_0^d$
and $|\al|:=\al_1+\cdot\cdot\cdot +\al_d$.
\begin{prop}\label{AMexplicit}
The Hausdorff
completion of $U(\frg)$ with respect to the family of norms
$||.||_{\frX,r},~r>1$ is an Arens-Michael algebra. The canonical homomorphism from $\hat{U}(\frg)$ into it
is a topological algebra isomorphism.
\end{prop}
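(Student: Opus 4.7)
The plan is to produce continuous algebra homomorphisms in both directions between $\hat U(\frg)$ and the Hausdorff completion $A$ of $U(\frg)$ with respect to the family $\{\|\cdot\|_{\frX,r}\}_{r>1}$. One direction will come from the universal property of the Arens-Michael envelope; the other from showing that every continuous submultiplicative seminorm on $U(\frg)$ is dominated by some $\|\cdot\|_{\frX,r}$.

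The core calculation is the submultiplicativity of $\|\cdot\|_{\frX,r}$ for $r$ sufficiently large. For a product $\frX^\al\frX^\beta$, the PBW theorem lets one rewrite it as the leading monomial $\frX^{\al+\beta}$ plus terms of strictly lower total degree, produced by iterated applications of the commutation rule $\frx_j\frx_i=\frx_i\frx_j+[\frx_j,\frx_i]$. An induction on $|\al|+|\beta|$ shows that the coefficients of these lower-degree terms are bounded by powers of $C:=\max_{i,j,k}|c_{ij}^k|$, where the $c_{ij}^k$ are the structure constants of $\frg$ relative to the chosen basis. For any $r\geq r_0:=\max(1,C)$ the weight factor $r^{|\al|+|\beta|}$ absorbs these coefficients, giving $\|\frX^\al\frX^\beta\|_{\frX,r}\leq r^{|\al|+|\beta|}$; the nonarchimedean triangle inequality then extends submultiplicativity to arbitrary elements. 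Since $r\mapsto\|\cdot\|_{\frX,r}$ is monotone, the subfamily $\{\|\cdot\|_{\frX,r}\}_{r\geq r_0}$ is cofinal in $\{\|\cdot\|_{\frX,r}\}_{r>1}$ and defines the same Fr\'echet topology; thus $A$ is the projective limit of the resulting Banach algebras and is Arens-Michael. This PBW-based estimate is the main obstacle of the proof: one must track the commutators carefully enough to close the induction.

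The remaining steps are formal. Because $A$ is Arens-Michael, the natural map $U(\frg)\to A$ factors uniquely through a continuous algebra homomorphism $\varphi\colon\hat U(\frg)\to A$. Conversely, given any continuous submultiplicative seminorm $p$ on $U(\frg)$, set $M:=\max_i p(\frx_i)$. Submultiplicativity yields $p(\frX^\al)\leq M^{|\al|}$, and the nonarchimedean triangle inequality gives $p(\sum_\al d_\al\frX^\al)\leq\sup_\al |d_\al|M^{|\al|}$, so $p\leq\|\cdot\|_{\frX,r}$ for any $r>\max(1,M)$. Consequently, the Arens-Michael topology on $U(\frg)$ is weaker than the topology defined by the family $\{\|\cdot\|_{\frX,r}\}_{r>1}$, and the identity on $U(\frg)$ extends by completion to a continuous homomorphism $\psi\colon A\to\hat U(\frg)$. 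By density of $U(\frg)$ in both completions, $\psi\circ\varphi$ and $\varphi\circ\psi$ are identities, so $\varphi$ is a topological isomorphism.
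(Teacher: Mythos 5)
Your proof is correct and takes essentially the same route as the paper, whose own proof merely asserts that the norms $||.||_{\frX,r}$ are submultiplicative and cofinal in the directed set of all submultiplicative seminorms on $U(\frg)$ (deferring details to \cite{SchmidtSTAB}); your PBW commutator estimate and the domination $p\leq\,$const$\cdot||.||_{\frX,r}$ for $r>\max(1,M)$ are exactly the content of those two assertions, followed by the formal universal-property argument. The only cosmetic difference is that you prove submultiplicativity only for $r\geq\max(1,C)$ and recover the full family $r>1$ by monotonicity and cofinality, whereas the paper states it for all $r>1$; this is harmless since both families define the same Hausdorff completion.
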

\begin{proof}
It is easy to see that each norm $||.||_{\frX,r},~r>1$ is
submultiplicative and that this family is cofinal in the directed
set of all submultiplicative seminorms on $U(\frg)$ (cf.
\cite{SchmidtSTAB})
\end{proof}
\begin{rem}
In analogy to the complex hyperenveloping algebra introduced by
P.K. Rasevskii (cf. \cite{Rasevskii}) the completion of $U(\frg)$
with respect to the norms $||.||_{\frX,r},~r>1$ is sometimes
called the {\it $p$-adic hyperenveloping algebra} of
$\mathfrak{g}$ (\cite{SchmidtSTAB},\cite{TeitelbaumParis}).
\end{rem}
The above discussion shows that we have a functor
$$\frg\mapsto \hUg$$ from finite dimensional Lie algebras over $K$ to
Arens-Michael $K$-algebras satisfying the obvious compatibilities
with respect to products/projective tensor products and passage to
quotients.
It is immediate that everything we said above may be applied {\it
mutatis mutandis} to the symmetric algebra $S(\frg)$ of $\frg$.
\begin{prop}\label{comparison}
The algebras $\hUg$ and $\hSg$ are Fr\'echet-Stein algebras which
are integral domains. The $K$-linear isomorphism $\Ug\simeq\Sg$
induced by the choice of basis $\frx_1,...,\frx_d$ extends to a
topological isomorphism $\hUg\simeq\hSg$.
\end{prop}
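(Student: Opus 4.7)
The plan is to handle the three assertions in turn, using the explicit Banach-completion description from Proposition \ref{AMexplicit}. Fix a cofinal sequence $1<r_1<r_2<\cdots$ tending to $\infty$, and write $U_n$ (resp.\ $S_n$) for the Banach-algebra completion of $U(\frg)$ (resp.\ $S(\frg)$) with respect to $\|\cdot\|_{\frX,r_n}$, so that $\hUg=\varprojlim_n U_n$ and $\hSg=\varprojlim_n S_n$.

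I begin with the topological isomorphism, which is essentially a restatement of PBW. The $K$-linear map $\sigma\colon U(\frg)\to S(\frg)$ sending the PBW monomial $\frx_1^{\alpha_1}\cdots\frx_d^{\alpha_d}$ to the commutative monomial of the same shape is a $K$-vector space isomorphism. Since $\|\cdot\|_{\frX,r}$ is defined purely in terms of coefficients in these respective bases, $\sigma$ is an isometry for every $r>1$. Passing to Hausdorff completions yields a topological $K$-linear isomorphism $U_n\simeq S_n$ for each $n$, and hence $\hUg\simeq\hSg$ as Fr\'echet spaces.

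Next I treat the commutative case. The algebra $S_n$ is the weighted Tate algebra in the variables $\frx_1,\dots,\frx_d$ of polyradius $r_n$, which is a well-known Noetherian integral domain. The transition map $S_{n+1}\hookrightarrow S_n$ is the restriction to a Weierstrass subdomain of the larger polydisc, and is therefore injective and (two-sided) faithfully flat. These facts say exactly that $\hSg$ is Fr\'echet-Stein, and integrality follows because the injective projection $\hSg\hookrightarrow S_1$ embeds it as a subring of the domain $S_1$. For $\hUg$ I would transport these properties to the noncommutative side via a filtered-to-graded argument: the PBW-degree filtration on $U(\frg)$ extends to a filtration on $U_n$ whose (appropriately completed) associated graded identifies with $S_n$, the crucial point being that $[\frx_i,\frx_j]\in U(\frg)_{\leq 1}$ has strictly smaller degree than $\frx_i\frx_j$. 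A standard lifting argument for filtered Banach algebras then transfers Noetherianity, two-sided flatness of transitions, and the domain property from $S_n$ to $U_n$, yielding the Fr\'echet-Stein structure on $\hUg$; the injective projection $\hUg\hookrightarrow U_1$ (injective because so are the transitions $U_{n+1}\to U_n$, which correspond under $\sigma$ to the injective inclusions $S_{n+1}\hookrightarrow S_n$) shows that $\hUg$ is itself a domain. The details of this filtered lifting in the present setting are carried out in \cite{SchmidtSTAB}.

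The principal technical obstacle is the Noetherianity and flatness at the Banach level: the PBW-degree filtration on $U_n$ is no longer exhaustive after completion, so the usual associated-graded transfer of Noetherianity from $S_n$ must be adapted to the completed setting. This adaptation is standard for algebras of hyperenveloping type, and I would simply invoke it from \cite{SchmidtSTAB} rather than reprove it here.
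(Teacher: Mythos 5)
Your proposal is correct and in substance takes the same route as the paper, whose proof simply cites \cite{SchmidtVECT}, Thm.\ 2.3 and \cite{SchmidtSTAB}, Thm.\ 2.1 and notes that the noetherian Banach algebras are the completions with respect to the single norms $\|\cdot\|_{\frX,r}$; your coefficientwise isometry argument for $\hUg\simeq\hSg$ and your deferral of the noetherianity, flatness and domain properties of the noncommutative Banach completions to \cite{SchmidtSTAB} are exactly what those references supply. One minor inaccuracy: the transition maps $S_{n+1}\to S_n$ (restriction to the smaller polydisc) are flat but not faithfully flat, which is all the Fr\'echet--Stein definition requires.
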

\begin{proof}
\cite{SchmidtVECT}, Thm. 2.3 and \cite{SchmidtSTAB}, Thm. 2.1. The
corresponding noetherian Banach algebras arise as the completions
with respect to single norms $||.||_{\frX,r}$.
\end{proof}

For future reference we denote the completion of $U(\frg)$ with
respect to the norm $||.||_{\frX,r}$ by $U_r(\frg)$. It is a
noetherian Banach algebra and the natural map
$\hat{U}(\frg)\rightarrow U_r(\frg)$ is flat (\cite{ST5}, Remark
3.2).

\vskip8pt

Let $V$ and $W$ be two locally convex $K$-spaces. We denote the
completed projective tensor product of $V$ and $W$ over $K$ by
$V\hat{\otimes}_KW$.
\begin{lemma}\label{PBW}
Suppose $\frg_1,...\frg_n$ are Lie subalgebras of $\frg$ such that
$\frg_1\oplus\cdot\cdot\cdot\oplus\frg_n=\frg$ as $K$-vector
spaces. There exists a unique isomorphism $f$ of topological
bimodules
$$\hat{U}(\frg_1)\hat{\otimes}_K\cdot\cdot\cdot\hat{\otimes}_K\hat{U}(\frg_n)\car
\hat{U}(\frg)$$ such that
$f(u_1\hat{\otimes}\cdot\cdot\cdot\hat{\otimes}u_n)=u_1\cdot\cdot\cdot
u_n$ for $u_i\in \hat{U}(\frg_i)$. Similarly for $S$ instead of
$U$.
\end{lemma}

\begin{proof}
We have the usual PBW-isomorphism of bimodules
$$U(\frg_1)\otimes_K\cdot\cdot\cdot\otimes_K U(\frg_n)\car
U(\frg)$$ (\cite{Dixmier}, Prop. 2.2.10) and similarly for $S$. In
the case of $S$ the latter is even an isomorphism of $K$-algebras
and compatibility of Arens-Michael envelopes with projective
tensor products yields the claim. The second claim of Prop.
\ref{comparison} applied to all algebras $\frg_1,...,\frg_n$ and
$\frg$ then yields the claim for $U$.
\end{proof}

\vskip8pt

We conclude this paragraph with some remarks in case $\frg$ is
abelian. By the universal property of the Arens-Michael envelope
any weight $\hUg\rightarrow K$ (cf. sect. \ref{sectdiag}) is
automatically continuous. The map
$$\hUg^{*}\car\frg^*, \lambda\mapsto [\frx\mapsto
\lambda(\frx)]$$ therefore identifies the set $\hUg^*$ canonically
with the $K$-linear dual $\frg^*$ of $\frg$. This identification
is compatible
with the isomorphism of locally convex $K$-algebras (cf. prop.
\ref{comparison})
\begin{equation}\label{basis}\frX: \hUg\car\cO(\bbA^{d,an}_K)\end{equation} mapping a chosen Lie
algebra basis $\frX:=\{\frx_1,...\frx_d\}$ to a system of
coordinates on $\bbA^{d,an}_K$. Here, $\bbA^{d,an}_K$ denotes the
rigid analytic affine $d$-space over $K$ (\cite{BGR}, 9.3.4) and
$\cO(\bbA^{d,an}_K)$ equals its ring of rigid analytic functions
viewed as a locally convex algebra in the usual way.

\subsection{Reductive Lie algebras}\label{subsectsemisimple}

From now on $\frg$ is a split reductive Lie algebra over K. We
refer to \cite{Dixmier} for the basic structure of such algebras.
Let $\frh$ be a Cartan subalgebra of $\frg$, $\frb$ be a Borel
subalgebra containing $\frh$, $\Phi$ the root system of $\frg$
with respect to $\frh$, and $\Phi^+$ and $\Delta$ the set of
positive and simple roots, respectively. Let $W$ denote the Weyl
group of $\Phi$. Denote by $\frn$ and $\frn^-$ the nilpotent
radicals of $\frb$ and $\frb^-$ respectively. We have
$\frn=[\frb,\frb]$ and $\frh\simeq\frb/\frn$ canonically. Let
$\frh^*$ denote the $K$-linear dual and put $l:=\dim_K\frh$. For
each root $\al\in\Phi$ let $\frg_\al$ be the one dimensional root
space in $\frg$. Finally, we let $\Lambda_r\subseteq\Lambda$ be
the root lattice and the integral weight lattice respectively.
$\Lambda$ contains the subsemigroup of dominant integral weights
$\Lambda^+$.

\vskip8pt

In the following we will fix a Chevalley basis
$\{\frx_\beta,\beta\in\Phi, h_\alpha, \alpha\in\Delta\}$ of the
derived algebra $\frg'=[\frg,\frg]$. We fix once and for all a
$K$-basis for the center $\frc$ of $\frg$ which, together with
$\{h_\alpha\}_{\alpha\in\Delta}$ gives rise to a $K$-basis of
$\frh$. Throughout this work we will work with this fixed choice
of $K$-basis of $\frh$ and call it $\frH$.

\vskip8pt

\subsection{Generalized Verma modules}

Generalized Verma modules (GVM) for parabolic subalgebras of
reductive Lie algebras were first introduced by J. Lepowsky
(\cite{Lepowsky}). For an extensive treatment of such modules we
refer to V. Mazorchuk's monograph (\cite{Mazorchuk}).

Let $\frp_I$ be a parabolic subalgebra of $\frg$ containing $\frb$
and let $I\subseteq\Delta$ be the associated subset of simple
roots. Let $\Phi_I\subseteq\Phi$ be the corresponding root system
with positive roots $\Phi_I^+$, negative roots $\Phi_I^-$ and Weyl
group $W_I\subseteq W$. Let $$\frp_I=\frl_I\oplus\fru_I$$ be a
Levi decomposition of $\frp_I$ with Levi subalgebra $\frl_I$ and
nilpotent radical $\fru_I\subseteq\frb$. Since $\frl_I$ is
reductive there is a further decomposition
$$\frl_I=\frg_I\oplus\frz_I$$
where $\frg_I$ and $\frz_I$ denote the derived algebra and the
center of $\frl_I$ respectively.

\vskip8pt

Setting $\frh_I:=\oplus_{\alpha\in I} Kh_\alpha$ defines a Cartan
subalgebra of the semisimple algebra $\frg_I$ such that
$$\frh=\frh_I\oplus\frz_I.$$
This induces a decomposition $\frh^*=\frh_I^*\oplus\frz_I^*$. For
each $\lambda\in\frh^*$ denote by $\lambda_I$ its projection to
$\frh_I^*$. Finally, let $\al^\vee$ denote the dual root to a
given $\al\in\Phi$ and define
$$\Lambda_I^+:=\{\lambda\in\frh^*| \langle
\lambda,\alpha^\vee\rangle\in\bbZ_{\geq 0} {\rm~for~all~}\alpha\in
I\}.$$

\vskip8pt

Let $\lambda\in\Lambda_I^+$. Denote by $L_I(\lambda)$ the
irreducible finite dimensional $\frg_I$-module with highest weight
$\lambda_I$ viewed as $\frl_I$-module (by letting $\frz_I$ act
through the projection of $\lambda$ to $\frz_I^*$). The
generalized Verma module (GVM) of weight $\lambda$ relative to
$\frp_I$ is the left $U(\frg)$-module
$$M_I(\lambda):=U(\frg)\otimes_{U(\frp_I)}L_I(\lambda)$$
where $L_I(\lambda)$ is inflated to a $\frp_I$-module via the
projection $\frp_I\rightarrow\frp_I/\fru_I=\frl_I$.

\vskip8pt

In case $I=\emptyset$ let $M(\lambda), N(\lambda), L(\lambda)$ be the classical Verma
module of weight $\lambda$, its unique maximal submodule and its
unique irreducible quotient respectively. The module
$M_I(\lambda)$ is generated by a maximal vector $m^+\in
M_I(\lambda)$ (i.e. $\frn.m^+=0$) and, thus, admits a surjection
$M(\lambda)\rightarrow M_I(\lambda)$. Hence $L(\lambda)$ equals
the unique irreducible quotient of $M_I(\lambda)$.

\subsection{The parabolic BGG category}\label{parabolicBGG}

We keep the notation of the preceding subsections and fix a
parabolic subalgebra $\frp=\frp_I$ of $\frg$. Let $\Mod (U(\frg))$
denote the category of all left $U(\frg)$-modules. Let us recall
the parabolic $BGG$ category \footnote{Traditionally, the
categories $\cO^\frp$ are defined for semi-simple (complex) Lie
algebras. Following \cite{OrlikStrauchJH} we extend their
definition here to general reductive Lie algebras over $K$.} in
the sense of A. Rocha-Caridi (\cite{Rocha-Caridi}). It equals the
full subcategory of $\Mod(U(\frg))$ consisting of modules $M$ such
that
\begin{itemize}
    \item[(i)] $M$ is finitely generated as $U(\frg)$-module;
    \item[(ii)] viewed as  a $U(\frl_I)$-module, $M$ is the direct
    sum of finite dimensional simple modules;
    \item[(iii)] $M$ is locally $\fru_I$-finite.
\end{itemize}
Here, (iii) means that $U(\fru_I)m$ is finite dimensional for each
$m\in M$.

\vskip8pt

It is known that $\cO^\frp$ is a $K$-linear, abelian, artinian and
noetherian category which is closed under submodules and quotients
(\cite{HumphreysBGG}). There are two extreme cases: the case
$I=\emptyset$ recovers the classical category $\cO$ in the sense
of Bernstein-Gelfand-Gelfand (\cite{BGG1}), while $I=\Delta$
yields the (semisimple) category of finite dimensional
$U(\frg)$-modules. Obviously, $\frp\subset\frp'$ implies
$\cO^{\frp'}\subset\cO^\frp$. We summarize a few more properties
of $\cO^\frp$ in the following theorem. Let $Z(\frg)$ denote the
center of $U(\frg)$ and $\chi$ a central character
$Z(\frg)\rightarrow K$.

\begin{thm}\label{propertiesparabolic}

\begin{itemize}
    \item[(i)] The modules $M_I(\lambda), \lambda\in\Lambda_I^+$ belong
    to $\cO^\frp$;
    \item[(ii)] the modules $L_I(\lambda), \lambda\in\Lambda_I^+$
    exhaust the set of irreducible objects in $\cO^\frp$;
    \item[(iii)] $\cO^\frp=\oplus_\chi \cO^\frp_\chi$ where
    $\cO^\frp_\chi$ consists of modules $M_\chi$ such that
    $(\ker\chi)^{n(m)}.m=0$ for some $n(m)\geq 1$ and all $m\in
    M_\chi$;
    \item[(iv)] $\cO^\frp$ has enough projective objects;
    \item[(v)] there is a bijection between irreducible objects and
    indecomposable projective objects in $\cO^\frp$;
    \item[(vi)] each $\cO^\frp_\chi$ is (noncanonically) equivalent to a
    category of finitely generated right modules over a
    finite-dimensional $K$-algebra $A^\frp_\chi$.
\end{itemize}
\end{thm}
\begin{proof}
The proofs mentioned in \cite{Mazorchuk}, \S5 all generalize
immediately to our setting of a split reductive $K$-algebra
$\frg$. Note that the $K$-rationality of the block decomposition
in (iii) follows from the (generalized) Harish-Chandra map
[loc.cit.], \S4.3 (compare also the argument in the proof of Prop.
\ref{block} below) like this. Let $M\in\cO^\frp$. Viewed as a
$\frl_I$-module $M$ decomposes into the direct sum over isotypic
components $M_\lambda$ where $\lambda$ equals an isomorphism class
of finite dimensional simple $\frl_I$-modules. Since $\frg$ is
split the highest weight of any such finite-dimensional simple
$\frl_I$-module is an element, denoted $\lambda$ again, of the
$K$-linear dual of $\frh$. If we decompose $\lambda$ into a sum of
two linear forms on the Cartan subalgebra of $\mathfrak{g}_I$ and
the center $\frz_I$ of $\frl_I$ respectively we obtain a character
of the tensor product of the algebras $Z(\mathfrak{g}_I)$ and
$S(\frz_I)$. Here, $S(\cdot)$ refers as usual to the symmetric
algebra. Let $K'$ be a finite field extension of $K$ and let $m$
be an element of $K'\otimes_K M_\lambda$ on which $Z(\frg)$ acts
through a $K'$-valued character $\chi$. By the very construction
of the generalized Harish-Chandra map
$$\varphi_I: Z(\frg)\longrightarrow Z(\mathfrak{g}_I)\otimes_K
S(\frz_I)$$ we then have $\chi=\lambda\circ\varphi_I$ and, thus,
$\chi$ takes values in $K$.
\end{proof}

The category $\cO^\frp_\chi$ satisfies an analogue of the
classical BGG reciprocity principle (\cite{BGG2}) or,
equivalently, the algebra $A^\frp_\chi$ appearing in (v) is a
so-called {\it BGG algebra}. For more information in this
direction and on the related class of {\it quasi-hereditary
algebras} we refer to the appendix.

\vskip8pt

Let $\Gamma_I\subseteq\frh^*$ be the positive cone over the set
$\Phi^+\setminus\Phi^+_I$, i.e. $$\Gamma_I:= \bbZ_{\geq
0}(\Phi^+\setminus\Phi^+_I).$$ Put $\Gamma=\Gamma_\emptyset$.
For $\lambda,\mu\in\frh^*$ we define a partial order on $\frh^*$
as usual via $$\lambda\geq\mu$$ if $\lambda-\mu\in\Gamma$. It will
be convenient to have the following weight characterization of
$\cO^\frp$ as a subcategory of $\cO$.

\begin{lemma}\label{char}
Let $M\in\cO$. Then $M\in\cO^\frp$ if and only if the
$\frh$-weights of $M$ lie in a finite union of cosets of the form
$\lambda-\Gamma_I$.
\end{lemma}
\begin{proof}
Put $\frn^-_I:=\oplus_{\alpha\in\Phi^-_I}\frg_\alpha$ so that
$\frn^-=\frn^-_I\oplus\fru^-_I$. According to \cite{HumphreysBGG}, Prop. 9.3 we are reduced to show that $M$ satisfies
the above condition on weights if and only if it is locally
$\frn^-_I$-finite.

In any case, $M\in\cO$ is finitely generated by
$\frh$-weight vectors $m_1,...,m_s$. Since any
$U(\frg)m_i\subseteq M$ is $\frh$-semisimple it suffices to treat
the case $s=1$. From the decompositions
$$\frg=\fru^-_I\oplus\frp_I=\fru^-_I\oplus\frl_I\oplus\fru_I=\fru^-_I\oplus\frg_I\oplus\frz_I\oplus\fru_I$$
and $$\frg_I=\frn_I^-\oplus\frh_I\oplus\frn_I$$ with
$\frh=\frh_I\oplus\frz_I$ and $\frh_I$ the Cartan subalgebra of
the semisimple algebra $\frg_I$ we obtain

$$\frg=\fru^-_I\oplus\frh\oplus\frn_I^-\oplus\frn$$ and therefore
$$U(\frg)=U(\fru^-_I)\otimes_KU(\frh)\otimes_KU(\frn_I^-)\otimes_KU(\frn)$$
as bimodules. Now if $M\in\cO$ is additionally locally
$\frn^-_I$-finite then multiplying $m$ with
$U(\frn_I^-)\otimes_KU(\frn)$ produces a finite-dimensional
$\frh$-stable subspace generated by finitely many $\frh$-weight
vectors of weights $\lambda$. Multiplying these with $U(\fru^-_I)$
produces only weights of the form $\lambda-\Gamma_I$.

Conversely, let $M$ satisfy the assumption on weights. If
$\lambda$ denotes the weight of $m$, multiplying $m$ by elements
in $(\frn_I^-)^n, n> 0$ produces weights of the form
$\lambda-\beta$ with $\beta\in\bbZ_{\geq 0}\Phi^+_I$. By
assumption only finitely many of such weights can occur whence
$(\frn_I^-)^n.m=0$ for some $n>0.$ Hence $\dim_K
U(\frn_I^-)m<\infty$ and $M$ is locally $\frn_I^-$-finite.

\end{proof}

\subsection{A $p$-adic counterpart}\label{BGG}
We keep the notation developed so far. Recall that $\hUg$ is
Fr\'echet-Stein and denote the category of coadmissible
$\hUg$-modules by $\cC_\frg$ (and similarly for appropriate
subalgebras of $\frg$). By functoriality we have a continuous
homomorphism $\hUh\rightarrow\hUg$ extending the inclusion
$\frh\subset\frg$. We apply the notions of section \ref{sectdiag}
to the commutative $K$-algebra $\hUh$ and the set of elements in
$\frH$. Restriction of scalars via $\hUh\rightarrow\hUg$ induces a
faithful and exact functor
$$\cC_\frg\longrightarrow\cM(\hUh).$$

\begin{lemma}\label{compact}
Let $M\in\cM(\hUh)$ be $\hUh$-diagonalisable with a set of weights
$\Pi(M)\subseteq\frh^*$ contained in finitely many cosets of the
form $\lambda-\Gamma_I$. Then $\Pi(M)$ is relatively compact.
\end{lemma}
\begin{proof}
Invoking the basis elements $\frH=\{h_1,...,h_l\}$ the map
$$\iota: \Pi(M)\rightarrow K^l, \lambda\mapsto
(\lambda(h_1),...,\lambda(h_l))$$ is injective. It suffices to see
that $\iota(\Gamma_I)$ is relatively compact. Since each root in
$\Phi$ is trivial on $\frc$ this image lies in the closed subspace
$K^{|\Delta|}\subseteq K^l$. Since each $h_\alpha,\alpha\in\Delta$
is part of a Chevalley basis we have $h_\alpha(\beta)\in\bbZ$ for
all $\beta\in\Phi$. It follows that the closure of
$\iota(\Gamma_I)$ lies in the closure of $\bbZ^{|\Delta|}$, i.e.
in $\bbZ_p^{|\Delta|}$.
\end{proof}
This lemma enables us to single out the following subcategory of
$\cC_\frg$. Let $\frp=\frp_I$.
\begin{dfn}\label{def} The category $\hat{\cO}^\frp$ for $\hUg$ equals the
full subcategory of $\cC_\frg$ consisting of coadmissible modules
$M$ satisfying:
\begin{itemize}
    \item[(1)] $M$ is $\hUh$-diagonalisable with $\Pi(M)$
    contained in the union of finitely many cosets of the form
    $\lambda-\Gamma_I, \lambda\in\frh^*$.
    \item[(2)] All weight spaces $M_\lambda, \lambda\in\Pi(M)$ are finite
    dimensional over $K$.
\end{itemize}
\end{dfn}
We let $\hO:=\hO^\frb$ in case $I=\emptyset$. Obviously
$\frp\subset\frp'$ implies $\hO^{\frp'}\subset\hO^{\frp}$. Before
we exhibit a class of interesting objects in $\hO^\frp$ we list
some basic formal properties.
\begin{prop}\label{propO}
\begin{itemize}
    \item[(i)] The direct sum in $\cC_\frg$ of two objects of $\hO^\frp$ is in $\hO^\frp$
    \item [(ii)] the (co)kernel and (co)image of an arbitrary
    $\hUg$-linear map between objects in $\hO^\frp$ is in $\hO^\frp$
    \item[(iii)] the sum of two coadmissible submodules of an object in $\hO^\frp$ is in $\hO^\frp$
    \item[(iv)] any finitely generated submodule of an object in
    $\hO^\frp$ is in $\hO^\frp$
    \item[(v)] $\hO^\frp$ is an abelian category.
\end{itemize}
\end{prop}
\begin{proof} This follows from Prop. \ref{coadmissibles} and Lem. \ref{exact}.\end{proof}
\begin{lemma}\label{canonicaltop2}
For any object $M$ in $\hO^\frp$ and any abstract $\hUg$-submodule
$N\subseteq M$ the following are equivalent:

\begin{itemize}
    \item[(i)] $N\in\hO^\frp$
\item[(ii)] $M/N\in\hO^\frp$
    \item[(iii)] $N$ is closed in the canonical topology of $M$.
\end{itemize}
\end{lemma}
\begin{proof}
Lem. \ref{canonicaltop}.
\end{proof}
Recall the exact category $\cD(\hUh)$ of section \ref{sectdiag}.
\begin{lemma}\label{propobjO}
Let $M\in\hO^\frp$. The map $N\mapsto N\cap M^{ss}$ defines an
inclusion preserving bijection between subobjects of
$M\in\cD(\hUh)$ and abstract $U(\frh)$-submodules of $M^{ss}$. It
descends to a bijection between subobjects of $M\in\hO^\frp$ and
abstract $U(\frg)$-submodules of $M^{ss}$.
\end{lemma}
\begin{proof}
$M$ is $U(\frh)$-diagonalisable with set of weights $\Pi(M)$ and
finite dimensional weight spaces. The first statement follows thus
from propositions \ref{diag} and \ref{exact}. For the second
statement observe that the $K$-subalgebra $\cA$ of $\hUg$
generated by $\hUh$ and $U(\frg)$ stabilizes $M^{ss}$ (e.g.
\cite{Dixmier}, Prop. 7.1.2). Again by Prop. \ref{diag} the
bijection descends to closed $\cA$-invariant subobjects of
$M\in\cD(\hUh)$ and abstract $U(\frg)$-submodules of $M^{ss}$. The
$\cA$-action on such a subobject $N\subseteq M$ uniquely extends
to $\hUg$ making $N$ a subobject of $M\in\hO^\frp$ according Lem.
\ref{canonicaltop2}.
\end{proof}
\begin{example}\label{finite0}
Let $M$ be a finite dimensional $\frg$-module. Since the
endomorphism algebra $\End_K(M)$ has a natural and unique
$K$-Banach topology the $\Ug$-action uniquely extends to $\hUg$
yielding $M\in\cC_\frg$. By standard highest weight theory for
reductive Lie algebras (\cite{Dixmier}) $M$ is
$\hUh$-diagonalisable with a finite set of weights contained in
$\Lambda^+$. Hence $M\in\hO^\frp$ and we have an exact and fully
faithful embedding from the finite dimensional $\frg$-modules into
$\hO^\frp$.

\end{example}


\subsection{$p$-adic Verma modules}

We exhibit Verma type modules in $\hO^\frp$. Let
$\lambda\in\Lambda_I^+$. Consider the finite dimensional
irreducible $\frl_I$-module $L_I(\lambda)$. As explained in the
example above the $\frl_I$-action extends to $\hat{U}(\frl_I)$.
Invoking the map $\hat{U}(\frp_I)\rightarrow\hat{U}(\frl_I)$ we
may form the left $\hat{U}(\frg)$-module

$$\hMIl:=\hUg\otimes_{\hUpI} L_I(\lambda).$$

\begin{prop}\label{verma} The module $\hMIl$ lies in $\hO^\frp$ and
we have $\hMIl^{ss}=M_I(\lambda)$. There is a canonical inclusion
preserving bijection between subobjects of $\hMIl$ and abstract
$U(\frg)$-submodules of $M_I(\lambda)$. In particular, the
topological $\hUg$-module $\hMIl$ is topologically irreducible if
and only if the abstract $U(\frg)$-module $M_I(\lambda)$ is
irreducible.
\end{prop}
\begin{proof}
We first show that $\hMIl$ is coadmissible. The left module
$M=\hUg\otimes_K L_I(\lambda)$ is coadmissible being isomorphic to
a direct sum over finitely many copies of $\hUg$. Its submodule
$N$ generated by the elements $x\otimes 1-1\otimes x$ where $x$
runs through a $K$-basis of $\frp_I$ is coadmissible. Being closed
it contains all elements of the form $y\otimes 1-1\otimes y$ with
$y\in\hat{U}(\frp_I)$ whence $M/N$ coincides with
$\hM_I(\lambda)$. Hence $\hMIl$ is coadmissible and its canonical
topology arises as a quotient topology from $M$. In particular,
the natural map $$\hMIl\car\hUg\hat{\otimes}_{\hUpI}
L_I(\lambda)$$ into the completed projective tensor product of the
locally convex $U(\frp_I)$-modules $\hat{U}(\frg)$ and
$L_I(\lambda)$ is a topological isomorphism. Let now
$\fru_I^-=\oplus_{\Phi^-\setminus\Phi_I^-}\frg_\alpha$ so that
$\fru_I^-\oplus\frp_I=\frg.$ Applying Lem. \ref{PBW} to the latter
decomposition and recalling that the completed projective tensor
product is associative we obtain that
$$\hM_I(\lambda)=\hat{U}(\fru_I^-)\otimes_K L_I(\lambda)$$
as left $\hat{U}(\fru_I^-)$-modules. Contemplating the
$\hat{U}(\frh)$-action on this representation we see that
$\hat{M}_I(\lambda)\in\hO^\frp$ and that
$\hat{M}_I(\lambda)^{ss}=U(\fru_I^-)\otimes_K
L_I(\lambda)=M_I(\lambda)$. The final two statements follow now
from Lem. \ref{propobjO}.
\end{proof}


We recall at this point that the irreducibility properties of
generalized Verma modules are dependent-at least in the case of
regular weights- on antidominance properties of the inducing
character. To be more precise, let
$\rho:=1/2\sum_{\al\in\Phi^+}\al$ and consider the following
condition on a weight $\lambda\in\Lambda_I^+$:
$$(*)\;\;\;\; \langle \lambda+\rho,\beta^\vee\rangle\notin\bbZ_{>0}
{\rm~for~all~}\beta\in\Phi^+\setminus\Phi_I.$$

In case $I=\emptyset$ we recover the usual definition of
antidominance (\cite{Dixmier}). Put
$w\cdot\lambda=w(\lambda+\rho)-\rho, w\in W$ for the usual {\it
dot-action} of $W$ on $\frh^*$. If the stabilizer with respect to
this action of $\lambda$ is trivial we call $\lambda$ {\it
regular}.
\begin{thm} Let $\lambda\in\Lambda_I^+.$
\begin{itemize}
    \item[(i)] If $\lambda$ satisfies (*), then $M_I(\lambda)$ is
    irreducible.
    \item[(ii)] If $\lambda$ is regular and $M_I(\lambda)$
    is irreducible, then $\lambda$ satisfies (*).
\end{itemize}
\end{thm}
The preceding theorem is due to Wallach-Conze-Berline-Duflo-Jantzen for which we refer to \cite{HumphreysBGG}, Thm. 9.12. In
case $I=\emptyset$ it holds true without the regularity condition
in (ii) and is due to Bernstein-Gelfand-
Gelfand for which we refer to \cite{Dixmier}, Thm. 7.6.24. We also mention that there
is a much deeper irreduciblity criterion for generalized Verma
modules which avoids any regularity conditions and is due to H.C.
Jantzen. We refer to \cite{HumphreysBGG}, 9.13 for a concise
account.




\subsection{Highest weight modules}

In the following it will be convenient-at least in case
$I=\emptyset$ -to introduce in our setting the notion of a highest
weight module.

So let $\frp=\frb$ be the Borel subalgebra. Let $M$ be for a
moment an arbitrary coadmissible $\hUg$-module. As usual, a {\it
maximal vector} of weight $\lambda\in\frh^*$ in $M$ is a nonzero
element $m\in M_\lambda$ such that $\frn.m=0$. We call a
coadmissible module $M$ a {\it highest weight module with highest
weight} $\lambda$ if it is a cyclic $\hUg$-module on a maximal
vector in $M_\lambda$.
\begin{rem}\label{maximal}
It follows directly from the definition of $\hO$ that any
$M\in\hO$ has a maximal vector. In particular, any irreducible
object in $\hO$ is a highest weight module according to property
$(iv)$ of Prop. \ref{propO}.
\end{rem}
\begin{lemma}\label{vermaII} The coadmissible module $\hMl$ is a highest weight module of weight
$\lambda$.
\end{lemma}
\begin{proof}
This follows as a special case from (the proof of) Prop.
\ref{verma}.



\end{proof}
\begin{prop}\label{highestweightmodules}
Let $M\in\cC_\frg$ be a highest weight module on a maximal vector
$m\in M$ of weight $\lambda\in\frh^*$. We have the
following:\begin{itemize}
    \item[(a)] $M$ is $\hUh$-diagonalisable with a compact set of weights
    $\Pi(M)$ satisfying $\mu\leq\lambda$ for $\mu\in\Pi(M)$.

    \item[(b)] One has $dim_K M_\mu<\infty$ and $dim_KM_\lambda=1$ for all
    $\mu\in\Pi(M)$. In particular, $M\in\hO$ and $M$ is a finite
    length object in $\hO$.
    \item[(c)] Each nonzero quotient of $M$ by a coadmissible
    submodule is again a highest weight module.
    \item[(d)] Each coadmissible submodule of $M$ generated by a
    maximal vector $m\in M$ of weight $\mu<\lambda$ is proper. In
    particular, if $M$ is a simple object then all its maximal vectors
    lie in $K.m$ and hence $End_{\hUg}(M)=K$.
    \item[(e)] $M$ has a unique maximal subobject and a unique
    simple quotient object and, hence, is indecomposable in
    $\cC_\frg$.
    \item[(f)] Let $M,N$ be two highest weight modules of weights $\lambda$ and $\mu$ respectively. We have $dim_K Hom_{\hUg}(M,N)<\infty$.
    If $\lambda\neq\mu$ then $M$ and $N$ are nonisomorphic. If $M$ and $N$ are simple objects and $\lambda=\mu$ then $M\simeq
    N$.
\end{itemize}
\end{prop}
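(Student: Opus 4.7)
The overall strategy is to reduce every assertion to (i) the structure of $\hMl$ from Proposition \ref{verma} and Corollary \ref{vermass}, and (ii) the subobject dictionary of Lemma \ref{propobjO}, which turns questions about coadmissible subobjects of $M \in \hO$ into questions about abstract $U(\frg)$-submodules of the algebraic socle $M^{ss}$. The indispensable preliminary step --- and the only technically delicate one --- is to identify any highest weight module $M \in \cC_\frg$ of weight $\lambda$ as a quotient of $\hMl$: the map $\hUg \to M$, $u \mapsto u \cdot m$, is surjective and $\hUg$-linear, hence continuous and strict with closed kernel by Proposition \ref{coadmissibles}(vii); this kernel contains all generators $x_\al$ and $h_\al - \lambda(h_\al)$, so by closedness it contains the defining left ideal of $\hMl$ produced in the proof of Proposition \ref{verma}, yielding a surjection $\hMl \twoheadrightarrow M$. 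This is the main obstacle, since it rests on the non-obvious fact that cyclic presentations of coadmissible modules have closed kernel.

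Granting this reduction, (a) is immediate from $\hMl \in \hO$ together with stability of $\hO$ under quotients (Proposition \ref{propO}). For (b) one applies the exact semisimplification functor (Proposition \ref{exact}): the surjection $\hMl \twoheadrightarrow M$ yields a surjection from the classical Verma $M(\lambda) = \hMl^{ss}$ onto $M^{ss}$, so $\dim_K M_\mu \leq \cP(\lambda - \mu) < \infty$ and $\dim_K M_\lambda = 1$ (since the image of $m$ is a nonzero $\lambda$-weight vector). Finite length of $M$ in $\hO$ then follows from Lemma \ref{propobjO} combined with the finite length of $M(\lambda)$ in classical $\cO$. Part (c) is the direct observation that if $N \subsetneq M$ is a coadmissible submodule then the image of $m$ in $M/N$ is nonzero (else $N \supseteq \hUg \cdot m = M$) and is still a cyclic maximal vector of weight $\lambda$.

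For (d), Lemma \ref{propobjO} reduces the claim to the classical fact that the $U(\frg)$-submodule of $M^{ss}$ generated by a $\mu$-weight vector lies in weights $\leq \mu$, hence misses $M_\lambda = K \cdot m$ whenever $\mu < \lambda$; the $\End_{\hUg}(M) = K$ part of the simple case then follows because any $\phi \in \End_{\hUg}(M)$ must send $m$ to a maximal vector in $M_\lambda = K \cdot m$ and is therefore a scalar. For (e), the same dictionary combined with the weight grading of $M^{ss}$ shows that a subobject $N$ is proper iff $N \cap M_\lambda = 0$, and a sum of graded $U(\frg)$-submodules of $M^{ss}$ with trivial $\lambda$-component still has trivial $\lambda$-component; the sum of all proper subobjects is therefore proper and forms the unique maximal subobject, yielding a unique simple quotient and hence indecomposability. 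Finally, (f) follows from the principle that any $\phi \in \Hom_{\hUg}(M,N)$ is determined by $\phi(m) \in N_\lambda$, giving $\dim_K \Hom_{\hUg}(M,N) \leq \dim_K N_\lambda < \infty$; an isomorphism forces $\lambda$ to be a weight of $N$ and $\mu$ a weight of $M$, so by (a) $\lambda = \mu$; and when $\lambda = \mu$ and both modules are simple, each is the unique simple quotient of $\hMl$ furnished by (e), so they are isomorphic.
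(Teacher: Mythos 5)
Your proposal is correct and follows essentially the same route as the paper: realize $M$ as a quotient of $\hMl$, push this through the exact functor $(\cdot)^{ss}$ to identify $M^{ss}$ as a classical highest weight module of weight $\lambda$, and then transfer the classical category-$\cO$ facts back via the subobject dictionary of Lemma \ref{propobjO}. The only difference is one of detail: you justify the surjection $\hMl\twoheadrightarrow M$ explicitly (closedness of the kernel of $u\mapsto u\cdot m$ via Prop. \ref{coadmissibles}), whereas the paper simply asserts that $M$ is a quotient of $\hMl$ and cites the classical results wholesale.
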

\begin{proof}
Since $M$ is a quotient object of $\hMl$ in $\cC_{\frg}$ we obtain
an $U(\frg)$-linear surjection
$$M(\lambda)=\hMl^{ss}\longrightarrow M^{ss}$$
by right exactness of $(.)^{ss}$. In particular, $M^{ss}$ is a
highest weight module of weight $\lambda$ in $\cO$. All properties
follow then from classical results on highest weight modules in
$\cO$ (e.g. \cite{HumphreysBGG}, Thm. 1.2).
\end{proof}
Let $\hLl$ denote the unique simple quotient of $\hMl$ so that
\begin{equation}\label{simpless}\hLl^{ss}\simeq L(\lambda).\end{equation}
\begin{cor}\label{simples}
The map $\lambda\mapsto [\hLl]$ is a bijection from $\frh^*$ onto
the set of isomorphism classes of irreducible objects of $\hO$.
\end{cor}
\begin{proof}
This follows as in the classical case of category $\cO$ using
\ref{simpless}.
\end{proof}

\section{Block decomposition and the main result}
\subsection{$p$-adic Harish-Chandra homomorphism}
We begin by recalling some standard results on the center
$Z(\frg)$ of $U(\frg)$ (\cite{Dixmier}). Recall that the usual
adjoint action of $\frg$ on itself extends to an action of $\frg$
by derivations on $U(\frg)$ and $S(\frg)$. Let $U(\frg)^\frg$ and
$S(\frg)^{\frg}$ denote the $K$-algebras of invariants.


\vskip8pt

Let $\gamma^\sharp$ be the algebra automorphism of $S(\frh)$
sending a polynomial function $f$ on $\frh^*$ to the function
$\lambda\mapsto f(\lambda-\rho)$. Let $U(\frg)_0$ be the commutant
of $\frh$ in $U(\frg)$. Then $$I:=\Ug\frn^+\cap
\Ug_0=\frn^-\Ug\cap\Ug_0$$ is a two-sided ideal in $\Ug_0$ such
that $\Ug_0=\Uh\oplus I$. The corresponding algebra surjection
$\varphi: \Ug_0\rightarrow \Uh$ is called the {\it Harish-Chandra
homomorphism} relative to $\frb$. The map
$$\psi:=\gamma^\sharp\circ\varphi|_{Z(\frg)}: Z(\frg)\car S(\frh)^W$$
is an algebra isomorphism independent of the choice of $\frb$.

\begin{rem} There is an important extension of this
construction to the case of a general parabolic subalgebra
$\frp\subset\frg$. This {\it generalized Harish-Chandra
homomorphism} is due to Drozd-Ovsienko-Futorny (\cite{DOF}) and is
a central tool in the study of generalized Verma modules. Since we
will not make use of it here (but see proof of thm.
\ref{propertiesparabolic} above) we refer to \cite{Mazorchuk}, 4.3
for a detailed description.
\end{rem}

\vskip8pt

It will be convenient to extend the above isomorphism $\psi$ to
Arens-Michael envelopes. That this is possible follows from work
of J. Kohlhaase on the center of $p$-adic distribution algebras
(\cite{KohlhaaseI}). We summarize the relevant results.
\begin{prop}\label{jan}
The $\frg$-action on $\Sg$ and $\Ug$ extends to Arens-Michael
envelopes and the same holds for the Weyl action on $\Sh$. The
algebra of invariants $\hUg^{\frg}$ coincides with the center
$\hZg$ of $\hUg$ and equals the closure of $Z(\frg)$. The
homomorphism $\psi$ extends to a topological isomorphism of
$K$-Fr\'echet algebras $$\hat{\psi}: \hZg\car \hSh^{W}.$$
\end{prop}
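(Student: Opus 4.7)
The proposition collects three assertions to prove in sequence: that the actions extend to the envelopes, that the center of $\hUg$ coincides with $\hUg^\frg$ and with $\overline{Z(\frg)}$, and that the Harish-Chandra isomorphism $\psi$ extends to a topological isomorphism $\hat{\psi}$.

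First I would extend the actions. For each $x \in \frg$ the adjoint derivation ${\rm ad}(x)$ on $U(\frg)$ is a derivation that raises the PBW-degree by at most one, so a short calculation with the Leibniz rule gives an estimate $\|{\rm ad}(x)(u)\|_{\frX,r} \leq C_{x,r}\|u\|_{\frX,r}$ for every $r > 1$. Hence ${\rm ad}(x)$ extends continuously to $\hUg$, and linearity in $x$ assembles these into a $\frg$-action; the same argument treats $\hSg$. For the Weyl action on $S(\frh)$, each $w \in W$ is a $\bbZ$-linear automorphism of $\bigoplus_i \bbZ h_i$ and therefore preserves the norms $\|\cdot\|_{\frH,r}$ up to a bounded factor, so extends to $\hSh$.

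For the second assertion, since $\frg$ topologically generates $\hUg$ and multiplication is jointly continuous, an element $z \in \hUg$ is central if and only if $[x,z] = {\rm ad}(x)(z) = 0$ for every $x \in \frg$, i.e.\ if and only if $z \in \hUg^\frg$. The equality $\hZg = \overline{Z(\frg)}$ is the specialisation to the Arens-Michael envelope of the distribution-algebra result of \cite{KohlhaaseI}.

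The bulk of the work lies in constructing $\hat{\psi}$. Invoking Lem.~\ref{PBW}, I would define a continuous $K$-linear projection
\[
\pi:\hUg \;\simeq\; \hUnm \hat\otimes_K \hUh \hat\otimes_K \hUn \;\longrightarrow\; \hUh
\]
as $\eps \hat\otimes {\rm id} \hat\otimes \eps$, where $\eps$ denotes the continuous augmentation on $\hUn$ and $\hUnm$. Restricted to the $\frh$-commutant $\hUg^\frh \supseteq \hZg$, this $\pi$ is a continuous algebra homomorphism extending the classical Harish-Chandra projection $\varphi$. Composing with the continuous extension of the $\rho$-shift $\gamma^\sharp$ to $\hSh = \hUh$ (which exists because it is a continuous coordinate translation under the isomorphism $\hSh \simeq \cO(\bbA^{l,an}_K)$ from~(\ref{basis})) yields a continuous algebra homomorphism $\hat{\psi}: \hZg \to \hSh$ extending $\psi$; by continuity together with density of $Z(\frg)$ in $\hZg$ the image lies in $\hSh^W$.

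The hard part is promoting $\hat{\psi}$ to a topological isomorphism. The strategy I would use is to realise $\hat{\psi}$ as the projective limit of Banach-algebra isomorphisms at each radius $r > 1$, by checking that $\psi$ and its inverse are simultaneously bounded with respect to the pair of norms $(\|\cdot\|_{\frX,r}, \|\cdot\|_{\frH,r})$ on $Z(\frg)$ and $S(\frh)^W$ respectively, so that compatible isomorphisms of the $r$-Banach completions assemble into $\hat{\psi}$. Establishing this uniform compatibility of $\psi^{\pm 1}$ with the Fr\'echet-Stein filtrations, which rests on a careful analysis of the ideal $I \subset \Ug_0$ inside the topological PBW decomposition, is the technical crux of the argument.
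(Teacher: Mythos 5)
There is a genuine gap. The paper itself disposes of this proposition by citation: all three assertions are taken from \cite{KohlhaaseI}, sect.~2.1 (the last one from Prop.~2.1.5 and the proof of Thm.~2.1.6), so the entire content of the statement is Kohlhaase's theorem. Your write-up handles the routine parts plausibly (continuity of ${\rm ad}(x)$ in the norms $\|\cdot\|_{\frX,r}$, the identification of the center with the $\frg$-invariants via joint continuity of multiplication, continuity of the $\rho$-shift), but for the part that actually carries the weight --- that $\hat{\psi}$ is a \emph{topological isomorphism onto} $\hSh^W$ --- you only announce a strategy and then explicitly defer it as ``the technical crux.'' That is precisely the statement to be proved, so as it stands the proposal does not prove the proposition; it reduces it to itself (or, implicitly, back to \cite{KohlhaaseI}, which is what the paper does openly).

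Two concrete problems with the announced strategy. First, requiring $\psi^{\pm1}$ to be bounded with respect to the norms $\|\cdot\|_{\frX,r}$ and $\|\cdot\|_{\frH,r}$ \emph{at the same radius} $r$ is stronger than what holds or what is needed: continuity of a map of Fr\'echet--Stein algebras only gives estimates with a change of radius, and there is no reason the Harish-Chandra projection respects the radius on the nose; moreover $Z(\frg)$ is not norm-dense in the Banach completions of $\hUg$, so ``Banach-algebra isomorphisms at each $r$'' between completions of $Z(\frg)$ and $S(\frh)^W$ do not obviously assemble into the desired statement about $\hZg$. Second, even granting a continuous $\hat{\psi}:\hZg\to\hSh^W$ extending $\psi$ with dense image (density of $S(\frh)^W$ in $\hSh^W$ does follow from the continuous Reynolds operator, $W$ being finite), neither injectivity nor surjectivity of the extension is automatic: a continuous extension of an injective map with dense image need not be bijective, and the open mapping theorem only helps \emph{after} bijectivity is known. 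Closing this requires a genuine argument --- e.g.\ Kohlhaase's, or a finiteness/rigid-geometric one exploiting that $\hSh$ is finite over $\hSh^W\simeq\cO(X/W)$ and that $\hZg=\overline{Z(\frg)}$ --- none of which is supplied. (The identity $\hZg=\overline{Z(\frg)}$ you also simply cite from \cite{KohlhaaseI}, which is fine, but note it is itself one of the nontrivial inputs.)
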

\begin{proof}
All this is contained in \cite{KohlhaaseI}, sect. 2.1. For example
the last statement follows from Prop. 2.1.5 and (proof of) Thm.
2.1.6.
\end{proof}
\begin{rem}\label{describecenter}
Let $X=\bbA^{l,an}_K$. The basis $\frH=\{h_1,..,h_l\}$ induces an
isomorphism $\frH:\hSh\car\cO(X)$ (cf. (\ref{basis})). It follows
that $W$ acts on $X$ by rigid analytic automorphisms. The
rigid-analytic quotient $X/W$ exists by finiteness of $W$
according to general principles ([loc.cit.]. The projection
$$X\longrightarrow X/W$$ is a finite morphism (\cite{BGR}, 9.4.4)
and as such has finite fibres (\cite{BGR} Cor. 9.6.3/6).
Alltogether $\frH$ induces a topological isomorphism
$$\overline{\frH}: \hSh^W\car\cO(X/W).$$ Finally, this situation is
the analytification of an algebraic action on algebraic affine
space via the finite group $W$. The usual description of
$S(\frh)^W$ as $l$-dimensional polynomial ring over $K$
(\cite{Dixmier}) therefore extends to completions yielding an
isomorphim of $K$-Fr\'echet algebras
$$\cO(X/W)\car\cO(\bbA^{l,an}_K)$$
onto the algebra of holomorphic functions on affine $l$-space. The
above proposition gives thus a very explicit description of the
center of $\hUg$. For more details we refer to \cite{KohlhaaseI}.
\end{rem}

\subsection{Central characters}

Recall that the usual {\it dot action} of $W$ on $\frh^*$ is given
by $w\cdot\lambda=w(\lambda+\rho)-\rho$ for $\lambda\in\frh^*,w\in
W$. Since translating the origin of $X=\bbA^{l,an}_K$ to $-\rho$
is a rigid isomorphism, say $\gamma$, the action extends to a
dot-action of $W$ on $X$ giving $\bar{\gamma}: X/W\car
X/(W,\cdot)$. Invoking Prop. \ref{jan} the composite
$(\bar{\gamma}^\sharp)^{-1}\circ\overline{\frH}\circ\hat{\psi}$ is
a canonical topological isomorphism
\begin{equation}\label{untwistedHC}
\hZg\car\hSh^W\car\cO(X/W)\car \cO(X/(W,\cdot))\end{equation} of
$K$-Fr\'echet algebras.

\vskip8pt

Now let $\lambda\in\frh^*$ and choose an irreducible highest weight module
$M\in\hO$ with maximal vector $m\in M_\lambda$. By Prop.
\ref{propobjO} we have $\End_{\hUg}(M)=K$ whence a continuous
character $\chi_\lambda: \hZg\longrightarrow K$. Since
$\hat{\psi}$ extends the $\gamma^\sharp$-twisted Harish-Chandra
homomorphism and since $Z(\frg)\subseteq \hZg$ is dense the
resulting map $\lambda\mapsto\chi_\lambda$ is induced by the rigid
analytic quotient morphism
$$\pi: X\longrightarrow X/(W,\cdot).$$
In particular, any continuous character $\chi:\hZg\rightarrow K$
arises, up to a finite extension of $K$, as some $\chi_\lambda$.


Moreover, a highest weight module of weight $\lambda$ has finite
length (Prop. \ref{propobjO}) and visibly all Jordan-H\"older
factors of such a module have highest weights contained in the
fibre of $\pi$ in $\chi=\pi(\lambda)$.

\vskip8pt

We turn back to the general case of a parabolic subalgebra
$\frp=\frp_I$ of $\frg$. We propose the following straightforward
variant of the classical decomposition of $\cO^\frp$ in terms of
central characters (\cite{HumphreysBGG},\cite{BGG2}). Let
$M\in\hO^\frp$ and let $\chi:\hZg\rightarrow K$ be a central
character. Then $\hZg$ acts on the weight spaces $M_\lambda$ ($\lambda$ being a $\hUh$-weight) and
we may form the subspace
$$M_\lambda^\chi:=\{m\in M_\lambda:
(\ker\chi)^n.m=0\text{\;for\;some\;} n=n(m)\geq 1\}.$$ Since
$\oplus_\lambda M_\lambda^\chi$ is a $U(\frg)$-submodule of
$M^{ss}$ its closure $M^\chi$ in $M$ is a subobject of $M$ (Lem.
\ref{propobjO}). We define the following full subcategory of
$\hO^\frp$: $$\hO^\frp_\chi:=\{M\in\hO^\frp: M^\chi=M\}.$$ In case
$I=\emptyset$ we write $\hO_\chi:=\hO_\chi^\frb$.
\begin{prop}\label{block}
The category $\hO_\chi^\frp$ is abelian.  The functor
$$\hO^\frp\rightarrow\hO^\frp_\chi, \;\;M\mapsto M^\chi$$ is exact and
induces an exact and faithful embedding of $\hO^\frp$ into the
direct product $\prod_\chi \hO^\frp_\chi$ (where $\chi$ runs
through the $K$-valued central characters).
\end{prop}
\begin{proof}
Since the inclusion $\hO^\frp\subseteq\hO$ is defined solely in
terms of weights (Def. \ref{def}) we are easily reduced to the
case $I=\emptyset$. Giving $\hO_\chi$ the exact structure coming
from $\hO$ let us show that $M\mapsto M^\chi$ is an exact functor.
Given a morphism $M\rightarrow N$ in $\hO$ we certainly have maps
$M_\lambda\rightarrow N_\lambda$ and $M_\lambda^\chi\rightarrow
N_\lambda^\chi$ for every $\chi$. Taking the sum over all
$\lambda$ and passing to closures with respect to the induced
subspace topologies we see that $M\mapsto M^\chi$ is indeed
functorial. Using strictness of maps in $\hO$ with respect to
canonical topologies (Prop. \ref{coadmissibles} (vii)) the same
argument yields its exactness (\cite{BGR}, Cor. 1.1.9/6). It is
now clear that the subcategory $\hO_\chi$ is closed under passage
to kernels and cokernels and, thus, abelian.

Now choose topological generators $z_1,...,z_l$ of $\hZg$
according to remark \ref{describecenter}. Then $M_\lambda^\chi$
equals the simultaneous generalized eigenspace of the finitely
many commuting operators $z_1,...,z_l$ on the finite dimensional
space $M_\lambda$ corresponding to the ordered set of eigenvalues
$\chi(z_1),...,\chi(z_l)$. In particular, there exists a finite
field extension $K\subseteq K'$ of $K$ such that $$K'\otimes_K
M_\lambda=\oplus_{\chi'} (K'\otimes_K M_\lambda)^{\chi'}$$ where
the sum runs over all $K'$-valued central characters $\chi'$ and
$(K'\otimes_K M_\lambda)^{\chi'}$ is defined in the obvious way.
We claim that $$(K'\otimes_K M_\lambda)^{\chi'}\neq 0\Rightarrow
\chi'(\hZg)\subseteq K.$$ Indeed, let $m\in K'\otimes_K M_\lambda$
and let $n\geq 1$ be minimal such that $(\ker\chi')^n.m=0$. On a
nonzero $m'\in (\ker\chi')^{n-1}.m$ the center $\hZg$ operates via
$\chi'$ and hence $\pi(\lambda)=\chi'$. In particular, $\chi'$ is
a $K$-valued point of $X/(W,\cdot)$ which proves the claim.

We therefore have $M_\lambda=\oplus_\chi M_\lambda^\chi$ with
$\chi$ running through the $K$-valued characters of $\hZg$.
Together with the obvious equality $M^{ss}\cap
M^\chi=\oplus_\lambda M_\lambda^\chi$ this implies
$M^{ss}=\oplus_\chi (M^\chi\cap M^{ss})$. It follows from this and
properties of $(\cdot)^{ss}$ that the sum $\sum_\chi M^\chi$ is
dense and direct in $M$. 
 In particular, the functor
$\hO\rightarrow\prod_\chi \hO_\chi, M\mapsto
(M^\chi)_\chi$ is faithful.

\end{proof}

\begin{prop}\label{finite}
The categories $\hOcp$ are artinian and noetherian.
\end{prop}
\begin{proof}
With the $p$-adic Harish Chandra map at hand we may imitate the
classical argument (\cite{BGG2},\cite{Dixmier}) as follows. Since
$\hOcp\subseteq\hOc$ is a full subcategory we may suppose
$I=\emptyset$. Let $M\in\hOc$ be given and put
$V:=\sum_{\mu\in\pi^{-1}(\chi)} M_{\mu}$. Since $\pi$ has finite
fibers we have $\dim_KV<\infty$. Suppose $N'\varsubsetneq
N\subseteq M$ are two subobjects. Let $m\in N/N'$ be a maximal
vector of some weight $\mu$. Since the subobject $\hUg.m\subseteq
N/N'$ is a highest weight module $\hZg$ operates on $m$ via
$\chi_\mu$. Hence $\chi_\mu=\chi$ and $\mu\in \pi^{-1}(\chi)$. By
definition $m\in N\cap V$ whence $\dim_K N\cap V>\dim_K N'\cap V$.
This shows $M$ to be artinian and noetherian.
\end{proof}

\subsection{The main result}

In this section we prove the following main result. As with any
Arens-Michael envelope (compare \ref{subsecthyper}) we have a
natural map
$$U(\frg)\rightarrow \hat{U}(\frg).$$

\begin{thm}\label{mainresult}
The functor $M\mapsto \hat{U}(\frg)\otimes_{U(\frg)} M$ induces an
equivalence of categories
$$\cO^\frp\car\hO^\frp.$$
A quasi-inverse is given by $(\cdot)^{ss}$. The equivalence
identifies $\cO^\frp_\chi\simeq\hO^\frp_\chi$ for any $K$-valued
central character $\chi$ and hence
$\hat{\mathcal{O}}^{\mathfrak{p}} = \prod_\chi
\hat{\mathcal{O}}^{\mathfrak{p}}_\chi $.
\end{thm}
According to well-known properties of $\cO^\frp$ (cf.
\cite{HumphreysBGG}, Thm. 9.8,\cite{Mazorchuk}, 5.2) we obtain
\begin{cor}
The category $\hO^\frp$ has enough injectives and projectives and
a duality. Each block $\hO^\frp_\chi$ is a highest weight category
and (noncanonically) equivalent to a category of finitely
generated right modules over a BGG algebra.
\end{cor}

To begin the proof of the theorem let us first assume
$I=\emptyset$. Recall (example \ref{finite0}) the fully faithful
embedding from the finite dimensional $\frg$-modules into $\hO$.
Any finitely generated (left) $U(\frg)$-module is finitely
presented and therefore $M\mapsto \hUg\otimes_{\Ug}M$ constitutes
a functor $F$ from such modules into $\cC_\frg$ (Prop.
\ref{coadmissibles} (v)). Our further investigation relies on the
following fact.
\begin{thm}
The extension $\Ug\rightarrow\hUg$ is flat.
\end{thm}
\begin{proof}
This is a direct consequence of the main result of
\cite{SchmidtSTAB}. Alternatively, one may pass to a finite
extension of $K$ and use flatness of adic completion at central
ideals of noetherian rings. This yields the flatness of the map
$U(\mathfrak{g})\rightarrow U_r(\mathfrak{g})$ for $r\in
p^{\mathbb{Q}}$ and then [28], (proof of) Thm. 4.11 gives the
claim.
\end{proof}
In particular, $F$ is exact. It is almost obvious that
$F(M(\lambda))=\hMl$ and hence any highest weight module of $\cO$
is mapped to a highest weight module in $\hO$. Since any module
$M$ in $\cO$ has a finite filtration with graded quotients being
highest weight modules (\cite{HumphreysBGG}, Cor. 1.2)
there is a surjection $\oplus_i
M_i\stackrel{\Sigma}{\longrightarrow} M$ where the source is a
finite direct sum of highest weight modules. Since $F$ commutes
with direct sums we see $F(M)\in\hO$. We have thus established an
exact functor
$$F:\cO\longrightarrow \hO$$
extending the aforementioned embedding of the finite dimensional
modules into $\hO$.
\begin{prop}\label{leftquasi}
The functor $F$ is fully faithful. A left quasi-inverse is given
by $(\cdot)^{ss}$.
\end{prop}
\begin{proof}
If $M\in\cO$ and $m\in M_\lambda$ the map $m\mapsto 1\otimes m$
induces a $U(\frh)$-linear homomorphism from $M_\lambda$ into the
$\lambda$-weight space of $F(M)^{ss}$. It extends to a
$U(\frg)$-linear homomorphism $M\rightarrow F(M)^{ss}$ natural in
$M$. If $M$ is a Verma module it is bijective according to Prop.
\ref{verma}. If $M\in\cO$ is a highest weight module we consider
an exact sequence
$$0\longrightarrow N\longrightarrow M(\lambda)\longrightarrow
M\longrightarrow 0$$ for suitable $\lambda\in\frh^*$. Writing $N$
as a subquotient of the left regular module $U(\frg)$ and
recalling (proof of Prop. \ref{verma}) that $\hMl$ equals the
completion of $U(\frg)/U(\frg)J=M(\lambda)$ with respect to the
(separated) quotient topology one sees that the natural injection
$F(N)\rightarrow \hMl$ has image equal to the closure of $N$ in
$\hMl$.
Hence, $N \simeq F(N)^{ss}$ by Prop. \ref{diag}(iii) and therefore
$M\simeq F(M)^{ss}$. Now let $M\in\cO$ be arbitrary. By devissage
we may assume that $M$ is an extension of highest weight modules.
But then $M\simeq F(M)^{ss}$ by the Five lemma.
\end{proof}

Next we will fix a number $r>1$ in $p^\bbQ$ and consider the
noetherian Banach algebra $U_r(\frg)$ (section
\ref{subsecthyper}). Working with our standard basis (section
\ref{subsectsemisimple}) of $\frg$ we see that the inclusion
$U(\frh)\subseteq U(\frg)$ extends to an isometry
$U_r(\frh)\subseteq U_r(\frg).$ Similarly we obtain isometries
$U_r(\frn),U_r(\frn^-)\subseteq U_r(\frg)$. We denote by
$$U_r(\frn^-)\hat{\otimes}_K U_r(\frh)\hat{\otimes}_K U_r(\frn)$$
the completion of the tensor product of these subalgebras with
respect to the usual tensor product norm (which coincides with the
completed projective tensor product, \cite{NFA}, Lem. 17.2).
\begin{lemma}
There PBW-decomposition of Lem. \ref{PBW} extends to an isometry
of Banach $(U_r(\frn^-),U_r(\frn))$-bimodules
$$U_r(\frn^-)\hat{\otimes}_K U_r(\frh)\hat{\otimes}_K
U_r(\frn)\car U_r(\frg).$$
\end{lemma}
\begin{proof}
The algebra structure being irrelevant here we may replace $U$ by
$S$. Since $r\in p^\bbQ$ passing to a finite extension of $K$
reduces us, by faithfully flat descent, to the case $r\in
|K^\times|$. We may therefore assume $r=1$ in which case the
result is well-known (\cite{BGR}, Cor. 6.1.1/8).
\end{proof}

Given a coadmissible module $M$ we let
$$M_r:=U_r(\frg)\otimes_{U(\frg)} M.$$ By the general
Fr\'echet-Stein formalism $M_r$ is a finitely generated Banach
$U_r(\frg)$-module and the natural map $M\rightarrow M_r$ has
dense image (\cite{ST5}, \S3). The following lemma is due to
Benjamin Schraen and I thank him for allowing me to reproduce it
here.

\begin{lemma} We have $\hLl_r\neq 0$ for any weight
$\lambda\in\frh^*$.
\end{lemma}
\begin{proof}
Consider the kernel $Q$ of the natural map $\hMl\rightarrow\hLl$.
Since $U(\frg)\rightarrow U_r(\frg)$ is flat the kernel of
$\hMl_r\rightarrow \hLl_r$ equals $Q_r$. Applying the above lemma
we see that $$\hMl_r\simeq U_r(\frn^-)\otimes_K K_\lambda$$ whence
$\hMl_r$ is $U_r(\frh)$-diagonalisable with
$$\hMl^{ss}=(\hMl_r)^{ss}$$ via the inclusion $\hMl\subseteq\hMl_r$.
By Prop. \ref{exact} the modules $Q_r$ and $\hLl_r$ are
$U_r(\frh)$-diagonalisable and it suffices to see that
$$(Q_r)^{ss}\subsetneq (\hMl_r)^{ss}.$$ Now $(Q_r)^{ss}\subseteq
Q_r$ is dense. Similarly, the composite map $Q^{ss}\subseteq
Q\subseteq Q_r$ has dense image. According to Prop. \ref{diag} we
therefore obtain $Q^{ss}=(Q_r)^{ss}$ as abstract
$U_r(\frh)$-submodules of $Q_r$. Since $\hLl\neq 0$ we arrive at
$$(Q_r)^{ss}=Q^{ss}\subsetneq\hMl^{ss}=(\hMl_r)^{ss}.$$
\end{proof}
\begin{lemma}\label{finite2}
The category $\hO$ is artinian and noetherian.
\end{lemma}
\begin{proof}
Let $M\in\hO$. Recall that $\oplus_\chi M^\chi$ is dense in $M$
according to Prop. \ref{block}. Letting $r>1$ in $p^\bbQ$ we
compute
$$M_r=U_r(\frg)\otimes_{U(\frg)}M\supseteq U_r(\frg)\otimes_{U(\frg)}(\oplus_\chi
M^\chi)=\oplus_\chi (M^\chi)_r.$$ Any nonzero $M^\chi$ has a
composition series (Prop. \ref{finite}) whence $\hLl_r\subseteq
(M^\chi)_r$ for some weight $\lambda\in\frh^*$ and then
$(M^\chi)_r\neq 0$ by the preceding lemma. Since $M_r$ is finitely
generated and $U_r(\frg)$ is noetherian this means that $M^\chi=0$
for all but finitely many $\chi$. But then $\oplus_\chi M^\chi$ is closed in $M$ according to Prop. \ref{coadmissibles}.
\end{proof}
\begin{lemma}
Given $M\in\hO$ the abstract $U(\frg)$-module $M^{ss}$ lies in
$\cO$. The correspondence $M\mapsto M^{ss}$ is a quasi-inverse to
$F$.
\end{lemma}
\begin{proof}
By the preceding result we may assume that $M$ is an extension of
two simple objects. According to the result (\ref{simpless}) we
see that $M^{ss}$ is a finitely generated $U(\frg)$-module on
which $\frh$ acts semisimple. By our assumption on the weights
$\Pi(M)$ the algebra $\frn$ acts locally finite. This means
$M^{ss}\in\cO$. To prove the second statement it suffices,
according to Prop. \ref{leftquasi}, to show that $(\cdot)^{ss}$ is
right quasi-inverse to $F$. Let $M\in\hO$. If we apply
$\hat{U}(\frg)\otimes_{U(\frg)} (\cdot)$ to the inclusion
$M^{ss}\subseteq M$ and compose with the map $u\otimes m\mapsto
um$ we obtain a morphism $F(M^{ss})\rightarrow M$ in $\hO$. If $K$
and $Q$ denote its kernel and cokernel respectively we have
$K^{ss}=Q^{ss}=0$ by Prop. 4.3.4 whence $K=Q=0$ by Prop.
\ref{diag}.
\end{proof}
This ends the proof of the theorem in case $I=\emptyset$. Now
consider the case of a general parabolic subalgebra $\frp=\frp_I$.
Since the functors $F$ and $(\cdot)^{ss}$ preserve $\frh$-weight
spaces Lem. \ref{char} shows that our established equivalence
$\hO\simeq\cO$ identifies the full subcategories
$\cO^\frp\subseteq\cO$ and $\hO^\frp\subseteq\hO$. It is obvious
that this identification respects the central blocks. This
finishes the proof of the theorem.

\appendix

\section{Quasi-hereditary algebras and highest weight
categories}\label{sectquasi}

We recall some basic facts on quasi-hereditary algebras and
highest weight categories. The following formulation is adapted to
our purposes. For more details we refer to \cite{DlabRingel} and
\cite{Farnsteiner}.

\bigskip

Let $K$ be a field, $A$ a finite dimensional $K$-algebra,
$\Mod_{\rm fg}(A)$ the category of finitely generated right
$A$-modules and $K_0(A)$ the Grothendieck group of $\Mod_{\rm
fg}(A)$. Let
$$(\Lambda,\leq)$$ be a fixed partially ordered finite set indexing a full set of representatives
$(L_\lambda)_{\lambda\in\Lambda}$ for the isomorphism classes of
simple right $A$-modules. The multiplicity of $L_\lambda$ in a
Jordan-H\"older series of a module $M$ will be denoted by
$[M:L_\lambda]$. Given $\lambda\in\Lambda$ let $P_\lambda$ and
$I_\lambda$ be a projective cover and injective hull of
$L_\lambda$ in $\Mod_{\rm fg}(A)$ respectively.

\bigskip

A collection of {\it standard modules} for $A$ (relative to the
partially ordered set $\Lambda$) is a set $\Delta$ of modules
$\Delta_\lambda\in\Mod_{\rm fg}(A)$ with the properties
$[\Delta_\lambda:L_\lambda]=1$ with ${\rm
Top}(\Delta_\lambda)\simeq L_\lambda$ and
$[\Delta_\lambda:L_\mu]=0$ if $\mu\nleq\lambda$.
Given such a set $\Delta$ let $\cF(\Delta)$ be the full
subcategory of $\Mod_{\rm fg}(A)$ consisting of modules $M$
admitting a finite filtration with graded quotients isomorphic to
members of $\Delta$. Given $M\in\cF(\Delta)$ the element $[M]$ of
$K_0(A)$ can be written as
$$[M]=\sum_{\lambda\in\Lambda}n_\lambda[\Delta_\lambda]=\sum_{\lambda\in\Lambda}
n_\lambda\sum_{\mu\in\Lambda} [\Delta_\lambda:L_\mu][L_\mu]$$ with
suitable $n_\lambda\in\bbN$. Choose a numbering
$\lambda_1,...,\lambda_s$ of the elements in $\Lambda$ such that
$\lambda_i <\lambda_j$ implies $i>j$. The matrix
$([\Delta_\lambda:L_\mu])_{\lambda,\mu}$ is then unipotent upper
triangular and since the elements $[L_\mu]$ form a $\bbZ$-basis of
$K_0(A)$, the coefficients $n_\lambda$ are uniquely determined.
The filtration multiplicities $(M:\Delta_\lambda)$ are therefore
independent of the choice of filtration. Finally, the standard
module $\Delta_\lambda$ is called {\it schurian} if
$\End_A(\Delta_\lambda)$ is a division ring.

\bigskip

Recall that in this situation $A$ is called (right) {\it
quasi-hereditary} if all standard modules are schurian and we have
$P_\mu\in \cF(\Delta)$ such that $(P_\mu:\Delta_\mu)=1$ and
$(P_\mu:\Delta_\lambda)=0$ if $\mu\nleq\lambda$ for all
$\lambda,\mu\in\Lambda$ (cf. \cite{DlabRingel}, \S1).

\begin{rem}
Let $A$ be quasi-hereditary with set of standard modules $\Delta$.
If $\leq_1$ is a total ordering on $\Lambda$ that contains $\leq$
then, trivially, $(A,\leq_1)$ is quasi-hereditary with the same
set of standard modules. In dealing with quasi-hereditary algebras
we may therefore always assume that $\Lambda=\{1,...,n\}$, some
$n$, equipped with its natural ordering. In other words, the issue
of a {\it non-adapted} $\Lambda$ (in the sense of [loc.cit.]) does
not arise here.
\end{rem}
\begin{rem}\label{highestweight} Let $A$ be quasi-hereditary.
Without recalling a precise definition we remark that $\Mod_{\rm
fg}(A)$ is a highest weight category in the sense of
Cline-Parshall-Scott (cf. \cite{ClineParshallScott}, Lem. 3.4).
\end{rem}
If $A$ is a quasi-hereditary algebra it is easy to see that each
$I_\lambda$ has a unique largest submodule $\nabla_\lambda$ with
$[\nabla_\lambda:L_\mu]=0$ for $\mu\nleq\lambda$. The modules
$\nabla_\lambda$ are sometimes called the {\it costandard modules}
associated to $A$ (cf. \cite{DlabRingel}, \cite{Farnsteiner}).
\begin{prop}
Let $A$ be quasi-hereditary. Then $A$ has (right) global dimension
bounded by $2|\Lambda|$. \end{prop}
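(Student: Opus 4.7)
The plan is to proceed by induction on $n := |\Lambda|$. The base case $n = 1$ is immediate: schurianness together with $(P_{\lambda_1}:\Delta_{\lambda_1}) = 1$ forces $P_{\lambda_1} = \Delta_{\lambda_1} = L_{\lambda_1}$, so $A$ is semisimple of global dimension $0$.

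For the inductive step, fix a maximal $\lambda \in \Lambda$ and let $e = e_\lambda$ be the primitive idempotent with $eA = P_\lambda$. Since every filtration factor $\Delta_\nu$ of $P_\lambda$ satisfies $\lambda \leq \nu$, maximality forces $P_\lambda = \Delta_\lambda$. Set $J := AeA$. Two points then need verification: (i) $J$ is projective as a right $A$-module, in fact isomorphic to a finite direct sum of copies of $P_\lambda$; and (ii) $A/J$ is quasi-hereditary with respect to the truncated poset $\Lambda \setminus \{\lambda\}$ and standard modules $\{\Delta_\mu\}_{\mu \neq \lambda}$. Point (i) uses that $eAe = \End_A(\Delta_\lambda)$ is a division ring by schurianness, so $Ae$ is free over $eAe$ and $J \cong Ae \otimes_{eAe} eA$ decomposes into copies of $eA = P_\lambda$. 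Point (ii) rests on the observation that for every $\mu \neq \lambda$ one has $\lambda \nleq \mu$, hence $[\Delta_\mu : L_\lambda] = 0$, so $\Delta_\mu \cdot e \cong \Hom_A(P_\lambda, \Delta_\mu) = 0$ and therefore $\Delta_\mu J = 0$; the same argument applies to $L_\mu$ and to $P_\mu / P_\mu J$, the latter serving as the projective cover of $L_\mu$ in $\Mod_{\rm fg}(A/J)$.

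Once (i) and (ii) are in place, the short exact sequence $0 \to J \to A \to A/J \to 0$ combined with the projectivity of $J$ yields $\mathrm{pd}_A(A/J) \leq 1$, and the standard change-of-rings argument gives $\mathrm{pd}_A(M) \leq \mathrm{pd}_{A/J}(M) + 1$ for every $A/J$-module $M$. The inductive hypothesis supplies $\mathrm{gl.dim}(A/J) \leq 2(n-1)$, hence $\mathrm{pd}_A(L_\mu) \leq 2n - 1$ for each $\mu \neq \lambda$. For $L_\lambda$ itself, I would invoke the short exact sequence $0 \to \mathrm{rad}(\Delta_\lambda) \to \Delta_\lambda \to L_\lambda \to 0$: the middle term is projective, and $\mathrm{rad}(\Delta_\lambda)$ has composition factors $L_\nu$ with $\nu < \lambda$ and is therefore an $A/J$-module, giving $\mathrm{pd}_A(L_\lambda) \leq \mathrm{pd}_A(\mathrm{rad}(\Delta_\lambda)) + 1 \leq 2n$. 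Taking the supremum over all simples yields $\mathrm{gl.dim}(A) \leq 2|\Lambda|$, completing the induction.

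The main technical obstacle is the verification of (i) and (ii), i.e.\ that $J = AeA$ is a heredity ideal and that $A/J$ inherits the quasi-hereditary structure on the reduced poset. Both assertions are classical in the theory and depend crucially on schurianness together with the maximality of $\lambda$; the remaining steps are then routine homological algebra.
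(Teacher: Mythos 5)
Your overall strategy is the right one, and it is in fact the argument behind the reference the paper gives for this statement (the paper itself offers no proof beyond citing Dlab--Ringel, Lem.\ 2.2): induct on $|\Lambda|$, pass to $J=AeA$ for a maximal $\lambda$, and do the change-of-rings bookkeeping. The homological arithmetic at the end is correct: $\mathrm{pd}_A(A/J)\leq 1$ once $J$ is right projective, $\mathrm{pd}_A(M)\leq \mathrm{pd}_{A/J}(M)+1$ for $A/J$-modules $M$, hence $\mathrm{pd}_A(L_\mu)\leq 2n-1$ for $\mu\neq\lambda$ and $\mathrm{pd}_A(L_\lambda)\leq 2n$ via $0\to\mathrm{rad}(\Delta_\lambda)\to\Delta_\lambda\to L_\lambda\to 0$, which meets the stated bound $2|\Lambda|$ (the sharp classical bound $2|\Lambda|-2$ needs only slightly more care, but is not required here).

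The genuine gap is in your justification of (i). Schurianness of $\Delta_\lambda$ does \emph{not} by itself give the isomorphism $J\cong Ae\otimes_{eAe}eA$, nor projectivity of $J$: for a general finite dimensional algebra one can easily have $eAe$ a division ring while $AeA$ fails to be right projective (e.g.\ the path algebra of a two-vertex quiver with arrows in both directions and all paths of length two equal to zero), so the multiplication map $Ae\otimes_{eAe}eA\to AeA$ being bijective is exactly the point that must be proved, and it uses the full quasi-hereditary structure, not just the division-ring property. The correct argument runs through the standard filtrations: since $\lambda$ is maximal, $\Delta_\lambda=P_\lambda$ is projective, so in a standard filtration of each $P_\mu$ the $\Delta_\lambda$-sections can be shifted to the bottom, giving a submodule $X_\mu\cong\Delta_\lambda^{c_\mu}$ with $P_\mu/X_\mu$ filtered by $\{\Delta_\nu\}_{\nu\neq\lambda}$; because $[\Delta_\nu:L_\lambda]=0$ for $\nu\neq\lambda$ one gets $\Hom_A(P_\lambda,P_\mu/X_\mu)=0$, hence $X_\mu=\mathrm{tr}_{P_\lambda}(P_\mu)=P_\mu J$. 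Summing over $\mu$ identifies $J$ with a finite direct sum of copies of $P_\lambda$ (this is where freeness of $Ae$ over the division ring $eAe$ enters, to count the copies), and simultaneously shows $P_\mu/P_\mu J\in\cF(\{\Delta_\nu\}_{\nu\neq\lambda})$ with the multiplicities required for $A/J$ to be quasi-hereditary on $\Lambda\setminus\{\lambda\}$ --- which is precisely the part of (ii) your sketch leaves unaddressed (knowing $P_\mu/P_\mu J$ is the projective cover of $L_\mu$ over $A/J$ is not enough; one needs its standard filtration). With that filtration argument supplied, your proof closes up and coincides with the classical one.
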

\begin{proof}
This follows from \cite{DlabRingel}, Lem. 2.2.
\end{proof}
\begin{rem} If the (right) global dimension of a finite dimensional $K$-algebra $A$ is $\leq 1$ then $A$ is (right) hereditary, i.e.
 all right ideals are projective. (\cite{AuslanderReitenbook}, Cor. 5.2). For the extensive and well-understood theory of hereditary algebras we refer to [loc.cit.], chap. VIII.
\end{rem}

\begin{prop}\label{weakBGG}
Let $A$ be a quasi-hereditary algebra. Then
$$(P_\mu:\Delta_\lambda)\cdot d_\lambda=[\nabla_\lambda:L_\mu]\cdot d_\mu$$
where $d_\lambda:=\dim_K \End_A(\Delta_\lambda)$ for all
$\lambda,\mu\in\Lambda$.
\end{prop}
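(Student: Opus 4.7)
The plan is to compute $\dim_K \Hom_A(P_\mu, \nabla_\lambda)$ in two different ways and equate the results.

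The key input is the orthogonality and Ext-vanishing relations for the pair $(\Delta,\nabla)$ in a quasi-hereditary algebra, namely
$$\Hom_A(\Delta_\nu, \nabla_\lambda) = 0 \text{ for } \nu \neq \lambda, \quad \dim_K \Hom_A(\Delta_\lambda, \nabla_\lambda) = d_\lambda, \quad \Ext^i_A(\Delta_\nu, \nabla_\lambda) = 0 \text{ for } i \geq 1.$$
These are standard in the theory of quasi-hereditary algebras (see \cite{DlabRingel}) and rest on the combinatorial facts that $\Delta_\nu$ has composition factors $L_\kappa$ only for $\kappa \leq \nu$ while $\nabla_\lambda$ has composition factors $L_\kappa$ only for $\kappa \leq \lambda$, together with the identifications ${\rm Top}(\Delta_\nu) = L_\nu = {\rm Soc}(\nabla_\nu)$ and the multiplicity-one conditions $[\Delta_\nu : L_\nu] = 1 = [\nabla_\nu : L_\nu]$. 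One also needs the identification $\dim_K \End_A(L_\lambda) = d_\lambda$, which is a consequence of the schurian property of $\Delta_\lambda$ together with $[\Delta_\lambda : L_\lambda] = 1$.

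For the first computation, the quasi-hereditary hypothesis gives $P_\mu \in \cF(\Delta)$ with a filtration whose graded quotients are standard modules, $\Delta_\nu$ appearing with multiplicity $(P_\mu : \Delta_\nu)$. The Ext-vanishing implies that $\Hom_A(-, \nabla_\lambda)$ is exact on $\cF(\Delta)$, so by induction on the filtration length one obtains
$$\dim_K \Hom_A(P_\mu, \nabla_\lambda) = \sum_{\nu \in \Lambda} (P_\mu : \Delta_\nu) \cdot \dim_K \Hom_A(\Delta_\nu, \nabla_\lambda) = (P_\mu : \Delta_\lambda) \cdot d_\lambda.$$

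For the second computation, since $P_\mu$ is the projective cover of $L_\mu$, the functor $\Hom_A(P_\mu, -)$ is exact and $\dim_K \Hom_A(P_\mu, L_\nu) = \delta_{\mu\nu} \dim_K \End_A(L_\mu) = \delta_{\mu\nu} d_\mu$. Applied term by term to a Jordan--H\"older series of the finite-length module $\nabla_\lambda$, this gives
$$\dim_K \Hom_A(P_\mu, \nabla_\lambda) = [\nabla_\lambda : L_\mu] \cdot d_\mu,$$
and equating with the first expression yields the claim. The main obstacle is verifying the Hom/Ext orthogonality for the pair $(\Delta,\nabla)$ and the identification $\End_A(L_\lambda) \cong \End_A(\Delta_\lambda)$; once these standard quasi-hereditary facts are granted, the proof reduces to a clean double-count.
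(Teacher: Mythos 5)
Your double-count of $\dim_K\Hom_A(P_\mu,\nabla_\lambda)$ is correct and is exactly the standard argument behind the paper's own proof, which consists only of the citation \cite{DlabRingel}, Lem. 2.5 and \cite{Farnsteiner}, Thm. 3. The one point you state a bit too quickly is $\End_A(L_\lambda)\simeq\End_A(\Delta_\lambda)$: schurianness and $[\Delta_\lambda:L_\lambda]=1$ only give the injection, and for surjectivity one also uses that the kernel of $P_\lambda\twoheadrightarrow\Delta_\lambda$ is the trace of the $P_\nu$ with $\nu\nleq\lambda$, hence fully invariant, so automorphisms of $L_\lambda$ lift to $\Delta_\lambda$; this is standard and does not affect the argument.
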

\begin{proof}
\cite{DlabRingel}, Lem. 2.5. and \cite{Farnsteiner}, Thm. 3
\end{proof}
A quasi-hereditary algebra is called a {\it BGG-algebra} if there
exists a contravariant involutive autofunctor $D$ on $\Mod_{\rm
fg}(A)$ such that $D(L_\lambda)\simeq L_\lambda$ for all
$\lambda\in\Lambda$ (\cite{Irving}). Such an algebra satisfies the
so-called {\it strong BGG reciprocity}:
\begin{prop}\label{strongBGG}
Let $A$ be a BGG algebra. Then
$$(P_\mu:\Delta_\lambda)\cdot d_\lambda=[\Delta_\lambda:L_\mu]\cdot d_\mu$$
where $d_\lambda:=\dim_K \End_A(\Delta_\lambda)$ for all
$\lambda,\mu\in\Lambda$.
\end{prop}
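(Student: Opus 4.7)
The plan is to deduce strong BGG reciprocity from the weak version in Proposition \ref{weakBGG}. Comparing the two statements, the task reduces to establishing the equality
\[
[\nabla_\lambda : L_\mu] = [\Delta_\lambda : L_\mu]
\]
for all $\lambda,\mu\in\Lambda$, once one is given the duality functor $D$. So my proof will focus on proving $D(\Delta_\lambda)\simeq \nabla_\lambda$ and then concatenating with Proposition \ref{weakBGG}.

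First I would record the basic properties of $D$. Because $D$ is a contravariant involutive autoequivalence of $\Mod_{\rm fg}(A)$, it carries short exact sequences to short exact sequences, sends projective covers to injective hulls, and preserves Jordan--H\"older multiplicities (since $D(L_\mu)\simeq L_\mu$ implies $[D(M):L_\mu]=[M:L_\mu]$). In particular $D(P_\lambda)\simeq I_\lambda$, and $D$ sends tops to socles.

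Next I would prove $D(\Delta_\lambda)\simeq \nabla_\lambda$ by a universal/maximality argument. Applying $D$ to the canonical surjection $P_\lambda\twoheadrightarrow \Delta_\lambda$ yields an injection $D(\Delta_\lambda)\hookrightarrow D(P_\lambda)\simeq I_\lambda$. Since the composition factors of $D(\Delta_\lambda)$ coincide with those of $\Delta_\lambda$, they vanish for $\mu\nleq\lambda$; by the defining maximality of $\nabla_\lambda$ as a submodule of $I_\lambda$, this places $D(\Delta_\lambda)$ inside $\nabla_\lambda$. The dual argument, applying $D$ to $\nabla_\lambda\hookrightarrow I_\lambda$, realises $D(\nabla_\lambda)$ as a quotient of $P_\lambda$ whose composition factors only involve $L_\mu$ with $\mu\leq\lambda$; using the analogous maximality characterisation of $\Delta_\lambda$ as the largest such quotient of $P_\lambda$, the surjection $P_\lambda\twoheadrightarrow D(\nabla_\lambda)$ factors through $\Delta_\lambda$. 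Applying $D$ once more and using $D^2\simeq {\rm id}$ gives an inclusion $\nabla_\lambda\hookrightarrow D(\Delta_\lambda)$, so the two inclusions inside $I_\lambda$ together force $D(\Delta_\lambda)\simeq \nabla_\lambda$. I should also note that $D$ induces a $K$-algebra isomorphism $\End_A(\Delta_\lambda)\simeq \End_A(\nabla_\lambda)^{\rm op}$, so the scalar $d_\lambda$ is insensitive to this swap.

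Finally I would conclude by substituting $[\nabla_\lambda:L_\mu]=[D(\Delta_\lambda):L_\mu]=[\Delta_\lambda:L_\mu]$ into the identity of Proposition \ref{weakBGG}. The main obstacle, and the only nontrivial point, is pinning down the maximality characterisations of $\Delta_\lambda$ (as the largest quotient of $P_\lambda$ with composition factors indexed by $\mu\leq\lambda$) and $\nabla_\lambda$ (dually as the largest such submodule of $I_\lambda$); both are standard consequences of the quasi-hereditary hypothesis and can be cited from \cite{DlabRingel}.
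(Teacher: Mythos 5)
Your proposal is correct and follows essentially the same route as the paper: combine Proposition \ref{weakBGG} with the fact that $D$ exchanges standard and costandard modules and preserves Jordan--H\"older multiplicities. The only difference is that you prove $D(\Delta_\lambda)\simeq\nabla_\lambda$ directly via the maximality characterisations of $\Delta_\lambda$ and $\nabla_\lambda$, whereas the paper simply cites this as \cite{Farnsteiner}, Lem.~4; your remark on $\End_A(\Delta_\lambda)$ versus $\End_A(\nabla_\lambda)$ is harmless but unnecessary, since $d_\lambda$ is defined via $\Delta_\lambda$ in both propositions.
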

\begin{proof}
It is easy to see that $D(\nabla_\lambda)\simeq\Delta_\lambda$ for
all $\lambda$ (\cite{Farnsteiner}, Lem. 4). The claim follows thus
from the above proposition using that $D$ preserves
Jordan-H\"older multiplicities.
\end{proof}
\begin{example}
Let $K$ be a $p$-adic local field, $\frg$ a split reductive Lie
algebra over $K$, $\frb$ a Borel subalgebra and
$\frp\subseteq\frg$ a parabolic subalgebra containing $\frb$.
Denote by $\cO^\frp$ the parabolic {\it BGG category} of $\frg$
relative to $\frp$ (cf. \ref{parabolicBGG}). Let $\chi$ be a
$K$-valued character of $Z(\frg)$ and $\cO^\frp_\chi$ the
corresponding central block of $\cO^\frp$. Then $\cO^\frp_\chi$ is
(noncanonically) equivalent to the category of finitely generated
(right) modules over a BGG-algebra $A^\frp_\chi$. A set of
schurian standard modules is given by the GVM's $M_I(\lambda)$
contained in $\cO^\frp_\chi$ (where $\frp=\frp_I$). The algebra
$A^\frp_\chi$ arises (a direct consequence of the theorem of
Gabriel- Mitchell (\cite{Bass}, Thm. II.1.3 and subsequent
exercise)) as the endomorphism algebra of a suitable projective
generator of the artinian and noetherian category $\cO^\frp_\chi$.
By work of W. Soergel its structure-at least in the case
$\frp=\frb$- can be explicitly determined (\cite{Soergel}).

\vskip8pt

In case of a complex semisimple algebra and a Borel subalgebra
this is the urexample in the theory of quasi-hereditary algebras
(cf. \cite{BGG2} and \cite{ClineParshallScott}, Example 3.3 (c)).
\end{example}

\bibliographystyle{plain}
\bibliography{mybib}

\begin{thebibliography}{10}

\bibitem{Amice2}
Y.~Amice.
\newblock Duals.
\newblock In {\em Proceedings of the {C}onference on {$p$}-adic {A}nalysis
  ({N}ijmegen, 1978)}, volume 7806 of {\em Report}, pages 1--15. Katholieke
  Univ. Nijmegen, 1978.

\bibitem{AuslanderReitenbook}
Maurice Auslander, Idun Reiten, and Smal{\o}Sverre O.
\newblock {\em Representation theory of {A}rtin algebras}, volume~36 of {\em
  Cambridge Studies in Advanced Mathematics}.
\newblock Cambridge University Press, Cambridge, 1995.

\bibitem{Bass}
H.~Bass.
\newblock {\em Algebraic {$K$}-theory}.
\newblock W. A. Benjamin, Inc., New York-Amsterdam, 1968.

\bibitem{BGG1}
I.~N. Bernstein, I.~M. Gelfand, and S.~I. Gelfand.
\newblock Differential operators on the base affine space and a study of
  {${\mathfrak g}$}-modules.
\newblock In {\em Lie groups and their representations ({P}roc. {S}ummer
  {S}chool, {B}olyai {J}\'anos {M}ath. {S}oc., {B}udapest, 1971)}, pages
  21--64. Halsted, New York, 1975.

\bibitem{BGG2}
I.~N. Bernstein, I.~M. Gelfand, and S.~I. Gelfand.
\newblock A certain category of {${\mathfrak g}$}-modules.
\newblock {\em Funkcional. Anal. i Prilo\v zen.}, 10(2):1--8, 1976.

\bibitem{BGR}
S.~Bosch, U.~G{\"u}ntzer, and R.~Remmert.
\newblock {\em Non-{A}rchimedean analysis}.
\newblock Springer-Verlag, Berlin, 1984.

\bibitem{Cartier}
P.~Cartier.
\newblock Representations of {$p$}-adic groups: a survey.
\newblock In {\em Automorphic forms, representations and {$L$}-functions
  ({P}roc. {S}ympos. {P}ure {M}ath., {O}regon {S}tate {U}niv., {C}orvallis,
  {O}re., 1977), {P}art 1}, Proc. Sympos. Pure Math., XXXIII, pages 111--155.
  Amer. Math. Soc., Providence, R.I., 1979.

\bibitem{ClineParshallScott}
E.~Cline, B.~Parshall, and L.~Scott.
\newblock Finite-dimensional algebras and highest weight categories.
\newblock {\em J. Reine Angew. Math.}, 391:85--99, 1988.

\bibitem{Dixmier}
Jacques Dixmier.
\newblock {\em Enveloping algebras}, volume~11 of {\em Graduate Studies in
  Mathematics}.
\newblock American Mathematical Society, Providence, RI, 1996.
\newblock Revised reprint of the 1977 translation.

\bibitem{DlabRingel}
Vlastimil Dlab and Claus~Michael Ringel.
\newblock The module theoretical approach to quasi-hereditary algebras.
\newblock In {\em Representations of algebras and related topics ({K}yoto,
  1990)}, volume 168 of {\em London Math. Soc. Lecture Note Ser.}, pages
  200--224. Cambridge Univ. Press, Cambridge, 1992.

\bibitem{DOF}
Yu.~A. Drozd, S.~A. Ovsienko, and V.~M. Futorny{\u\i}.
\newblock The {H}arish-{C}handra {$S$}-homomorphism and {${\mathfrak
  G}$}-modules generated by a semiprimitive element.
\newblock {\em Ukrain. Mat. Zh.}, 42(8):1031--1037, 1990.

\bibitem{Farnsteiner}
R.~Farnsteiner.
\newblock Quasi-hereditary algebras: {B}{G}{G} reciprocity.
\newblock {\em Course notes. Available at:
  {$http://www.math.uni-bielefeld.de/~sek/select/BGG.pdf$}}.

\bibitem{Feaux}
Christian~Tobias F{\'e}aux~de Lacroix.
\newblock Einige {R}esultate \"uber die topologischen {D}arstellungen
  {$p$}-adischer {L}iegruppen auf unendlich dimensionalen {V}ektorr\"aumen
  \"uber einem {$p$}-adischen {K}\"orper.
\newblock In {\em Schriftenreihe des {M}athematischen {I}nstituts der
  {U}niversit\"at {M}\"unster. 3. {S}erie, {H}eft 23}, volume~23 of {\em
  Schriftenreihe Math. Inst. Univ. M\"unster 3. Ser.}, pages x+111. Univ.
  M\"unster, M\"unster, 1999.

\bibitem{Helemskii}
A.Ya. Helemskii.
\newblock {\em Banach and {L}ocally {C}onvex {A}lgebras}.
\newblock Oxford Science Publications. Oxford University Press, 1993.

\bibitem{HumphreysBGG}
James~E. Humphreys.
\newblock {\em Representations of semisimple {L}ie algebras in the {BGG}
  category {$\mathcal{O}$}}, volume~94 of {\em Graduate Studies in
  Mathematics}.
\newblock American Mathematical Society, Providence, RI, 2008.

\bibitem{Irving}
Ronald~S. Irving.
\newblock B{GG} algebras and the {BGG} reciprocity principle.
\newblock {\em J. Algebra}, 135(2):363--380, 1990.

\bibitem{Kiehl}
R.~Kiehl.
\newblock Theorem {A} und {T}heorem {B} in der nichtarchimedischen
  {F}unktionentheorie.
\newblock {\em Invent. Math.}, 2:256--273, 1967.

\bibitem{KohlhaaseI}
J.~Kohlhaase.
\newblock Invariant distributions on {$p$}-adic analytic groups.
\newblock {\em Duke Math. J.}, 137(1):19--62, 2007.

\bibitem{Lepowsky}
J.~Lepowsky.
\newblock Generalized {V}erma modules, the {C}artan-{H}elgason theorem, and the
  {H}arish-{C}handra homomorphism.
\newblock {\em J. Algebra}, 49(2):470--495, 1977.

\bibitem{Mazorchuk}
V.~Mazorchuk.
\newblock {\em Generalized {V}erma modules}, volume~8 of {\em Mathematical
  Studies Monograph Series}.
\newblock VNTL Publishers, Lviv, 2000.

\bibitem{OrlikStrauchJH}
S.~Orlik and M.~Strauch.
\newblock On {J}ordan-{H}\"older series of some locally analytic
  representations.
\newblock {\em Preprint 2010. Available at:
  {$http://arxiv.org/abs/1001.0323$}}.

\bibitem{Pirkovskii}
A.~Yu. Pirkovskii.
\newblock Stably flat completions of universal enveloping algebras.
\newblock {\em Dissertationes Math. (Rozprawy Mat.)}, 441:60, 2006.

\bibitem{Rasevskii}
P.~K. Ra{\v{s}}evski{\u\i}.
\newblock Associative superenvelope of a {L}ie algebra and its regular
  representation and ideals.
\newblock {\em Trudy Moskov. Mat. Ob\v s\v c.}, 15:3--54, 1966.

\bibitem{Rocha-Caridi}
Alvany Rocha-Caridi.
\newblock Splitting criteria for {${\mathfrak g}$}-modules induced from a
  parabolic and the {B}er\v nste\u\i n-{G}elfand-{G}elfand resolution of a
  finite-dimensional, irreducible {${\mathfrak g}$}-module.
\newblock {\em Trans. Amer. Math. Soc.}, 262(2):335--366, 1980.

\bibitem{SchmidtVECT}
T.~Schmidt.
\newblock Analytic vectors in continuous {$p$}-adic representations.
\newblock {\em Compositio Math.}, 145:247--270, 2009.

\bibitem{SchmidtSTAB}
T.~Schmidt.
\newblock Stable flatness of nonarchimedean hyperenveloping algebras.
\newblock {\em Journal of Algebra}, 323/3:757--765, 2010.

\bibitem{NFA}
P.~Schneider.
\newblock {\em Nonarchimedean functional analysis}.
\newblock Springer Monographs in Mathematics. Springer-Verlag, Berlin, 2002.

\bibitem{ST5}
P.~Schneider and J.~Teitelbaum.
\newblock Algebras of {$p$}-adic distributions and admissible representations.
\newblock {\em Invent. Math.}, 153(1):145--196, 2003.

\bibitem{Soergel}
Wolfgang Soergel.
\newblock Kategorie {$\mathcal O$}, perverse {G}arben und {M}oduln \"uber den
  {K}oinvarianten zur {W}eylgruppe.
\newblock {\em J. Amer. Math. Soc.}, 3(2):421--445, 1990.

\bibitem{TeitelbaumParis}
J.~Teitelbaum.
\newblock Alg{$\grave{e}$}bres de {H}ecke et repr{\'e}sentations de {B}anach et
  localement analytiques des groupes de {L}ie p-adiques.
\newblock {\em Course notes. Available at:
  {$http://evenements.institut.math.jussieu.fr/ecole_ete_2008/files/Teitelbaum%
-Main.Notes_.pdf$}}.

\end{thebibliography}

\end{document}